\def\thm@space@setup{%
  \thm@preskip=\parskip \thm@postskip=0pt
}
\newcommand{\ignore}[1]{}
\newcommand{\be}{\begin{equation}}
\newcommand{\ee}{\end{equation}}
\renewcommand{\Re}{\operatorname{Re}}
\renewcommand{\Im}{\operatorname{Im}}
\newcommand{\C}{{\mathbb{C}}}
\newcommand{\R}{{\mathbb{R}}}
\newcommand{\D}{{\mathbb{D}}}
\newcommand{\E}{{\mathbb{E}}}
\newcommand{\EE}{{\mathbb{E}}}
\newcommand{\PP}{{\mathbb{P}}}
\newcommand{\diam}{\mbox{diam}}
\newtheorem{thm}{Theorem}[section]
\newtheorem{prop}[thm]{Proposition}
\newtheorem{cor}[thm]{Corollary}
\newtheorem{lemma}[thm]{Lemma}
\newtheorem*{claim*}{Claim}
\theoremstyle{definition}
\newtheorem{example}[thm]{Example}
\theoremstyle{remark}
 \renewcommand\epsilon{\varepsilon}
\title{Inradius of random lemniscates}
\author{Manjunath Krishnapur, Erik Lundberg, and Koushik Ramachandran}
\begin{document}

\begin{abstract}
A classically studied geometric property associated to a complex polynomial $p$ is the inradius (the radius of the largest inscribed disk) of its (filled) lemniscate $\Lambda := \{z \in \mathbb{C}:|p(z)| < 1\}$.

In this paper, we study the lemniscate inradius when the defining polynomial $p$ is random, namely, with the zeros of $p$ sampled independently from a compactly supported probability measure $\mu$.  If the negative set of the logarithmic potential $U_{\mu}$ generated by $\mu$ is non-empty, then the inradius is bounded from below by a positive constant with overwhelming probability.  Moreover, the inradius has a determinstic limit if the negative set of $U_{\mu}$ additionally contains the support of $\mu$.

On the other hand, when the zeros are sampled independently and uniformly from the unit circle, then the inradius converges in distribution to a random variable taking values in $(0,1/2)$.

We also consider the characteristic polynomial of a Ginibre random matrix whose lemniscate we show is close to the unit disk with overwhelming probability.

\end{abstract}

\maketitle

\section{Introduction}
\noindent Let $p(z)$ be a polynomial of degree $n$ and $\Lambda$ be its (filled) lemniscate defined by $\Lambda= \{z: |p(z)| < 1\}.$
Denote by $\rho(\Lambda)$ the inradius of $\Lambda$. By definition, this is the radius of the largest disk that is completely contained in $\Lambda.$ 
In this paper, we study the inradius of random lemniscates for various models of random polynomials.

The lemniscate $\{z:|z^n-1| < 1 \}$ has an inradius asymptotically proportional to $1/n$.
In 1958, P. Erd\"os, F. Herzog, and G. Piranian posed a number of problems \cite{EHP} on geometric properties of polynomial lemniscates.  Concerning the inradius, they asked \cite[Problem 3]{EHP} whether the rate of decay in the example $\{|z^n-1|=1 \}$ is extremal, that is, whether there exists a positive constant $C$ such that for any monic polynomial of degree $n,$ all of whose roots lie in the closed unit disk, the inradius $\rho$ of its lemniscate $\Lambda$ satisfies $\rho\geq\frac{C}{n}$.
This question remains open.
C. Pommerenke \cite{Pomm1961} showed in this context that
the inradius satisfies the lower bound $\rho \geq \frac{1}{2e\, n^2}$. 


 Our results, which we state below in Sec. \ref{sec:results} of the Introduction, show within probabilistic settings that the \emph{typical} lemniscate admits a much better lower bound on its inradius.
Namely, if the zeros of $p$ are sampled independently from a compactly supported measure $\mu$ whose logarithmic potential has non-empty negative set, then the inradius of $\Lambda$ is bounded below by a positive constant with overwhelming probability, see Theorem \ref{G} below.
Let us provide some insight on this result and explain why the logarithmic potential of $\mu$ plays an important role.  First, the lemniscate $\Lambda$ can alternatively be described as the sublevel set $\{ \frac{1}{n}\log |p(z)| < 0 \}$ of the discrete logarithmic potential $\frac{1}{n} \log|p(z)| = \frac{1}{n}\sum \log|z-z_k|$ where $z_k$ are the zeros of $p(z)$.  For fixed $z$ the sum $\frac{1}{n}\sum \log|z-z_k|$ is a Monte-Carlo approximation for the integral defining the logarithmic potential $U_\mu(z)$ of $\mu$, and, in particular, it converges pointwise, by the law of large numbers, to $U_\mu(z)$.  With the use of large deviation estimates, we can further conclude that each $z$ in the negative set $\Omega^-$ of $U_\mu$ is in $\Lambda$ with overwhelming probability.  The property of holding with overwhelming probability survives (by way of a union bound) when taking an intersection of polynomially many such events.  This fact, together with a suitable uniform estimate for the derivative $p'(z)$ (for which we can use a Bernstein-type inequality), allows for a standard epsilon-net argument showing that an arbitrary compact subset of $\Omega^-$ is contained in $\Lambda$ with overwhelming probability.  Since $\Omega^-$ is assumed nonempty, this leads to the desired lower bound on the inradius, see the proof of Theorem \ref{G} in Section \ref{sec:main} for details.

Under an additional assumption that the negative set $\Omega^-$ of the logarithmic potential of $\mu$ contains the support of $\mu$, the inradius converges to the inradius of $\Omega^-$ almost surely, see Corollary \ref{cor:convergenceofinradius}; in particular, the inradius has a deterministic limit.

On the other hand, for certain measures $\mu$, the inradius does not have a deterministic limit and rather converges in distribution to a nondegenerate random variable, see Theorem \ref{thm:unifcirc} addressing the case when $\mu$ is uniform measure on the unit circle.
We also consider the lemniscate associated to the characteristic polynomial of a random matrix sampled from the Ginibre ensemble, and we show that the inradius is close to unity (in fact the whole lemniscate is close to the unit disk) with overwhelming probability, see Theorem \ref{Gin}.

See Section \ref{sec:results} below for precise statements of these results along with some additional results giving further insight on the geometry of $\Lambda$.

\vspace{0.1in}


\subsection{Previous results on random lemniscates}

The current paper fits into a series of recent studies investigating the geometry and topology of random lemniscates.  Let us summarize previous results in this direction.  We note that the lemniscates studied in the results cited below, in contrast to the filled lemniscates of the current paper, are level sets (as opposed to sublevel sets).

Partly motivated to provide a probabilistic counterpart to the Erd\"os lemniscate problem on the extremal length of lemniscates \cite{EHP}, \cite{Borwein}, \cite{EremHay}, \cite{FN}, the second and third authors in \cite{LR} studied the arclength and topology of a random polynomial lemniscate in the plane.  When the polynomial has i.i.d. Gaussian coefficients, it is shown in \cite{LR} that the average length of its lemniscate approaches a constant.
They also showed that with high probability the length is bounded by a function with arbitrarily slow rate of growth, which means that the length of a lemniscate typically satisfies a much better estimate than the extremal case.  
It is also shown in \cite{LR} that the number of connected components of the lemniscate is asymptotically $n$ (the degree of the defining polynomial) with high probability, and there is at least some fixed positive probability of the existence of a ``giant component'', that is, a component having at least some fixed positive length.
Of relevance to the focus of the current paper, we note that the proof of the existence of the giant component in \cite{LR} shows that for a fixed $0<r<1$, there is a positive probability that the inradius $\rho$ of the lemniscate satisfies the lower bound $\rho > r$.

Inspired by Catanese and Paluszny's topological classification \cite{Catanese} of generic polynomials (in terms of the graph of the modulus of the polynomial with equivalence up to diffeomorphism of the domain and range), in \cite{EHL} the second author with  M. Epstein and B. Hanin studied the so-called lemniscate tree associated to a random polynomial of degree $n$.  The lemniscate tree of a polynomial $p$ is a labelled, increasing, binary, nonplane tree that encodes the nesting structure of the singular components of the level sets of the modulus $|p(z)|$.  When the zeros of $p$ are i.i.d. sampled uniformly at random according to a probability density that is bounded with respect to Haar measure on the Riemann sphere, it is shown in \cite{EHL} that the number of branches (nodes with two children) in the induced lemniscate tree is $o(n)$ with high probability, whereas a lemniscate tree sampled uniformly at random from the combinatorial class has asymptotically $\left( 1- \frac{2}{\pi} \right) n$  many branches on average.

In \cite{LLlemni}, partly motivated by a known result \cite{EremHay},  \cite{TrefWeg}) stating that the maximal length of a rational lemniscate on the Riemann sphere is $2\pi n$, the second author with A. Lerario studied the geometry of a random rational lemniscate and showed that the average length on the Riemann sphere is asymptotically $\frac{\pi^2}{2}\sqrt{n}$.    Topological properties (the number of components and their nesting structure) were also considered in \cite{LLlemni}, where the number of connected components was shown to be asymptotically bounded above and below by positive constants times $n$.   Z. Kabluchko and I. Wigman subsequently established an asymptotic limit law for the number of connected components in \cite{KaWi} by adapting a method of F. Nazarov and M. Sodin \cite{NazSod} using an integral geometry sandwich and ergodic theory applied to a  translation-invariant ensemble of planar meromorphic lemniscates obtained as a scaling limit of the rational lemniscate ensemble.

\subsection{Motivation for the study of lemniscates}
The study of lemniscates has a long and rich history with a wide variety of applications.  The problem of computing the length of Bernoulli's lemniscate played a role in the early study of elliptic integrals \cite{Ayoub}. Hilbert's lemniscate theorem and its generalizations \cite{Totik} show that lemniscates can be used to approximate rather arbitrary domains, and this density property contributes to the importance of lemniscates in many of the applications mentioned below. 
In some settings, sequences of approximating lemniscates arise naturally for example in holomorphic dynamics \cite[p. 159]{Milnor}, where it is simple to construct a nested sequence of ``Mandelbrot lemniscates'' that converges to the Madelbrot set.
In the classical inverse problem of logarithmic potential theory---to recover the shape of a two-dimensional object with uniform mass density from the logarithmic potential it generates outside itself---uniqueness has been shown to hold for lemniscate domains \cite{Strakhov}.  This is perhaps surprising in light of Hilbert's lemniscate theorem and the fact that the inverse potential problem generally suffers from non-uniqueness \cite{Etingof}.  Since lemniscates are real algebraic curves with useful connections to complex analysis, they have frequently received special attention in studies of real algebraic curves, for instance in the study of the topology of real algebraic curves \cite{Catanese}, \cite{BauerCatanese}.   Leminscates such as the Arnoldi lemniscate appear in applications in numerical analysis \cite{Tref}.  Lemniscates have seen applications in two-dimensional shape compression, where the ``fingerprint'' of a shape constructed from conformal welding simplifies to a particularly convenient form---namely the $n$th root of a  Blaschke product---in the case the two-dimensional shape is assumed to be a degree-$n$ lemniscate \cite{EKS}, \cite{Younsi}, \cite{RichardsYounsi}.
Lemniscates have appeared in studies of moving boundary problems of fluid dynamics \cite{KhavLund}, \cite{LundTotik}, \cite{KMPT}.
In the study of planar harmonic mappings, rational lemniscates arise as the critical sets of harmonic polynomials \cite{KhavLeeSaez}, \cite{LLtrunc} as well as critical sets of lensing maps arising in the theory of gravitational lensing \cite[Sec. 15.2.2]{Petters}.
Lemniscates also have appeared prominently in the theory and application of conformal mapping \cite{Bell}, \cite{JT}, \cite{GPSS}.
See also the recent survey \cite{Richards} which elaborates on some of the more recent of the above mentioned lines of research.

\subsection{Definitions and Notation}
Throughout the paper,  $\mu$ will denote a Borel probability measure with compact support $S\subset\C$. The logarithmic potential of $\mu$ is defined by
$$U_{\mu}(z)= \int_{S}\log|z-w|d\mu(w). $$ 
It is well known that $U_{\mu}$ is a subharmonic function in the plane, and harmonic in $\C\setminus S$. For such $\mu$, we denote the associated negative and positive sets of its potential by
$$\Omega^{-} = \{z\in\C: U_{\mu}(z) < 0\}, \hspace{0.05in} \Omega^{+}= \{z\in\C: U_{\mu}(z) > 0\}.$$ 
It is easy to see that $\Omega^{-}$ is a (possibly empty) bounded open set.

\noindent{\bf Assumptions on the measure.}
\noindent Let $\mu$ be a Borel probability measure with compact support $S\subset\C$. We define the following progressively stronger conditions on $\mu$.
\begin{enumerate}[(A)]
\item  \label{A} For each compact $K\subset\C$,
\begin{equation*}
C(K) =\sup_{z\in K}\int_{S}\left(\log |z-w|\right)^2d\mu(w) < \infty .  
\end{equation*}
\item  \label{B} There is some $C<\infty$ and $\epsilon>0$ such that for all $z\in\C$ and all $r\leq 1,$ we have $$\mu\left(B(z, r)\right)\leq \frac{C}{(\log (1/r))^{2 +\epsilon}}.$$
\item  \label{C} There exists $\delta > 0$ such that 
$$\sup_{z\in\mathbb{C}}\int_{S}\dfrac{d\mu(w)}{|z-w|^{\delta}} < \infty .$$
\item  \label{D} There is some $C<\infty$ and $\epsilon>0$ such that for all $z\in\C$ and all $r>0$, we have
$$\mu\left(B(z, r)\right)\leq Cr^{\epsilon}.$$
\end{enumerate}


\vspace{0.1in}

%

\subsection{Main results}\label{sec:results}
In all theorems (except Theorem~\ref{Gin}), we have the following setting:

\noindent{\bf Setting}: $\mu$ is a compactly supported probability measure on $\C$ with support $S$. The random variables $X_i$ are i.i.d. from the distribution $\mu$. We consider the random polynomial $p_n(z):=(z-X_1)\ldots (z-X_n)$ and its lemniscate $\Lambda_n:=\{z \; : \; |p_n(z)|<1\}$. We write $\rho_n=\rho(\Lambda_n)$ for the inradius of $\Lambda_n$.

Throughout the paper, w.o.p. means with overwhelming probability, i.e., with probability at least $1-e^{-cn}$ for some $c>0$.

The theorems below concern the random lemniscate $\Lambda_n$. Observe that $\Lambda_n$ consists of all $z$ for which $\log|p_n(z)|<0$, or what is the same,
\begin{align*}
\frac{1}{n}\sum_{k=1}^n\log|z-X_k|<0.
\end{align*}
By the law of large numbers, the quantity on the left converges to $U_{\mu}(z)$ pointwise.  Hence we may expect the asymptotic behaviour of $\Lambda_n$ to be described in terms of $U_{\mu}$ and its positive and negative sets $\Omega^{+},\Omega^{-}$. The first three theorems make this precise under different conditions on the underlying measure $\mu$.

\begin{thm}\label{G}
 Assume that  $\mu$ satisfies assumption~(\ref{A}). Suppose that $\Omega^{-} \neq\emptyset$ and let $\rho = \rho(\Omega^{-}).$ 
Fix compact sets $K\subset\Omega^{-},$ and $L\subset\Omega^{+}\setminus{S}.$ Then for all large $n$,
$$K\subset\Lambda_n,\quad\mbox{w.o.p.,} \quad\mbox{and} \quad L\subset \Lambda_n^{c} \quad\mbox{w.o.p}. $$ 
In particular, if $\rho_n$ denotes the inradius  of $\Lambda_n$, then
$$\rho_n\geq a \quad\mbox{w.o.p.,}\quad\forall{ a\in (0, \rho) }$$
\end{thm}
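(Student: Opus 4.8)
The plan is to run the heuristic sketched in the introduction rigorously, using a combination of large deviation estimates, a Bernstein-type inequality for the derivative, and an epsilon-net argument. Fix a compact set $K \subset \Omega^-$. Since $U_\mu$ is continuous on $\C \setminus S$ and upper semicontinuous everywhere, and $\Omega^-$ is open, we have $\sup_{z \in K} U_\mu(z) =: -2\eta < 0$ for some $\eta > 0$. First I would show a pointwise statement: for each fixed $z \in K$, the event $\{\frac{1}{n}\sum_{k=1}^n \log|z - X_k| < -\eta\}$ holds w.o.p. Writing $Y_k = \log|z - X_k|$, these are i.i.d. with mean $U_\mu(z) \le -2\eta$; they are not bounded, but assumption~(\ref{A}) gives a uniform bound on their second moments over $z \in K$. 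Using a truncation argument (split $Y_k$ at level $-T\sqrt{n}$ or so) together with a Bernstein/Bennett inequality for the truncated part and a crude union bound for the rare large-deviation events on the tail, one obtains $\PP(\frac{1}{n}\sum Y_k \ge -\eta) \le e^{-cn}$ with $c$ depending only on $\eta$ and $C(K)$.

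Next I would upgrade the pointwise estimate to a uniform one over $K$ via an epsilon-net. Choose a net $\{z_1, \dots, z_N\}$ in $K$ of spacing $\delta_n$ with $N = N(\delta_n) \le C \delta_n^{-2}$; taking $\delta_n$ polynomially small in $n$, say $\delta_n = n^{-A}$, keeps $N$ polynomial in $n$, so the union bound over the net still yields $\frac{1}{n}\log|p_n(z_j)| < -\eta$ simultaneously for all $j$, w.o.p. To pass from the net to all of $K$, I need control on how fast $\frac{1}{n}\log|p_n|$ can vary. On a slightly larger compact set $K'$ (still inside $\Omega^-$, so in particular $p_n$ has no zeros there w.o.p. — indeed the zeros lie in $S$, which is disjoint from $K'$ when $S \cap \Omega^- = \emptyset$; in general one argues that w.o.p. few zeros land near $K$), one estimates $|p_n'(z)/p_n(z)| = |\sum_k (z - X_k)^{-1}|$. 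A Bernstein-type inequality, or just the observation that w.o.p. no $X_k$ is within $n^{-B}$ of $K$ for suitable $B$ — which follows from assumption~(\ref{A}) controlling $\mu(B(z,r))$ through the second-moment bound — gives $|\nabla \frac{1}{n}\log|p_n(z)|| \le n^{C}$ on $K$, w.o.p. Choosing $A > C$ makes the oscillation of $\frac{1}{n}\log|p_n|$ over each net cell at most $\eta$, so $\frac{1}{n}\log|p_n(z)| < 0$ for all $z \in K$, i.e. $K \subset \Lambda_n$ w.o.p.

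For the complementary statement, fix a compact $L \subset \Omega^+ \setminus S$. On $\C \setminus S$ the potential $U_\mu$ is harmonic hence continuous, so $\inf_{z \in L} U_\mu(z) =: 2\eta' > 0$; moreover $\dist(L, S) > 0$, which makes $\log|z - w|$ bounded for $z \in L$, $w \in S$ — so here the $Y_k$ are bounded and the large deviation step is the standard Hoeffding/Cramér estimate, no truncation needed. The same epsilon-net argument (now with a trivial Lipschitz bound for $\frac{1}{n}\log|p_n|$ on $L$, since $p_n$ stays bounded away from $0$ there w.o.p.) gives $\frac{1}{n}\log|p_n(z)| > \eta' > 0$ on all of $L$, i.e. $L \subset \Lambda_n^c$, w.o.p.

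Finally, the inradius bound: fix $a \in (0,\rho)$. By definition of $\rho = \rho(\Omega^-)$ there is a closed disk $\overline{B(z_0, a)} \subset \Omega^-$; take $K = \overline{B(z_0,a)}$, which is compact in $\Omega^-$, apply the first part to conclude $\overline{B(z_0,a)} \subset \Lambda_n$ w.o.p., hence $\rho_n \ge a$ w.o.p. The main obstacle is the first step: because $\log|z - X_k|$ has an unbounded (logarithmic) singularity, the large deviation estimate is not immediate, and the truncation must be done carefully so that the contribution of the deleted tail is itself controlled w.o.p. — this is exactly where assumption~(\ref{A}) is used, and getting the bookkeeping right (in particular keeping the net size polynomial while the oscillation bound stays a fixed power of $n$) is the delicate part.
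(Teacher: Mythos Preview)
Your outline for $L\subset\Omega^{+}\setminus S$ is essentially the paper's argument, and the final inradius deduction is correct. The pointwise large-deviation step for $K$ also works, though the truncation you propose is unnecessary: since $K$ and $S$ are compact, $Y_k=\log|z-X_k|$ is bounded \emph{above} by some $b$, and assumption~(\ref{A}) gives a uniform second-moment bound, so Bennett's inequality applies directly to the one-sided sum (this is exactly why the paper quotes Bennett rather than Hoeffding).

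The genuine gap is in your net argument for $K$. You propose to control the oscillation of $\tfrac{1}{n}\log|p_n|$ on $K$ via the logarithmic derivative $\sum_k(z-X_k)^{-1}$, and for this you need that no $X_k$ lies too close to $K$. But nothing prevents $S$ from meeting $\Omega^{-}$; in the paper's basic example $\mu=$ uniform on $\overline{\D}$ one has $S=\overline{\D}=\Omega^{-}$, so a positive fraction of the $X_k$ fall inside any fixed disk $K\subset\D$, and $|p_n'/p_n|$ blows up there. Your parenthetical ``in general one argues that w.o.p.\ few zeros land near $K$'' is simply false under assumption~(\ref{A}) alone.

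The paper sidesteps this by never looking at $\nabla\log|p_n|$. Instead it reduces (via the maximum modulus principle) to controlling $|p_n|$ on $\partial G$ for a Jordan domain $G$ with $K=\overline{G}$, and uses a Bernstein-type inequality of Pommerenke,
\[
|p_n'(z)|\le C(G)\,n^2\,\|p_n\|_{\partial G},
\]
which bounds $|p_n'|$ in terms of the \emph{sup of $|p_n|$ itself} and is completely insensitive to where the zeros sit. A short bootstrapping argument then shows $\|p_n\|_{\partial G}\le 2\max_j|p_n(w_j)|$ for a net $\{w_j\}$ of size $O(n^2)$ on $\partial G$, and the pointwise estimate $|p_n(w_j)|<\tfrac12$ w.o.p.\ plus a union bound finishes. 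Replacing your gradient control by this polynomial Bernstein inequality closes the gap.
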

\begin{cor}\label{cor:convergenceofinradius} In the setting of Theorem~\ref{G}, $\liminf \rho_{n}\ge \rho$ a.s. Further, if $S\subseteq \Omega^{-}$, then $\rho_n\to \rho$ a.s.
\end{cor}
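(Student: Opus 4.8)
The plan is to upgrade Theorem~\ref{G} from ``with overwhelming probability'' to ``almost surely'' by the Borel--Cantelli lemma, supplying one new deterministic ingredient about inradii of sublevel sets of $U_\mu$. For the lower bound $\liminf_n\rho_n\ge\rho$ (which needs no extra hypothesis), fix $a\in(0,\rho)$; by Theorem~\ref{G} there are $c=c(a)>0$ and $N=N(a)$ with $\PP(\rho_n<a)\le e^{-cn}$ for $n\ge N$, and since these probabilities are summable, Borel--Cantelli gives $\rho_n\ge a$ for all large $n$ a.s., hence $\liminf_n\rho_n\ge a$ a.s. Intersecting these almost sure events over $a=\rho-1/k$, $k\in\N$, yields $\liminf_n\rho_n\ge\rho$ a.s.

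Now assume $S\subseteq\Omega^-$, and record two deterministic facts. First, there is $R_0$ depending only on $S$ with $\Lambda_n\subseteq\overline{B(0,R_0)}$ for every realization: if $S\subseteq\overline{B(0,M)}$ and $|z|>M+1$, then each factor of $p_n$ has modulus $>1$, so $z\notin\Lambda_n$; take $R_0=M+1$. Second, I claim that for each $\epsilon>0$ there is $\eta=\eta(\epsilon)>0$ such that every open disk of radius $\rho+\epsilon$ contains a point $z$ with $U_\mu(z)\ge\eta$. If not, for each $j$ there is a disk $B(c_j,\rho+\epsilon)$ on which $U_\mu<1/j$; since $\{U_\mu<1\}$ is bounded (because $U_\mu(z)=\log|z|+o(1)$ at infinity), the $c_j$ lie in a fixed ball, so $c_{j_l}\to c_*$ along a subsequence, and for each $z\in B(c_*,\rho+\epsilon)$ one has $z\in B(c_{j_l},\rho+\epsilon)$ for large $l$, hence $U_\mu(z)\le 0$; thus $U_\mu\le 0$ on the open disk $B(c_*,\rho+\epsilon)$. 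By the strong maximum principle a subharmonic function that is $\le 0$ on a disk and equals $0$ at an interior point is constant there, and the constant cannot be $0$: $U_\mu\equiv 0$ on a disk would force that disk to be disjoint from $S$ and $U_\mu$ to vanish identically on the component of $\C\setminus S$ containing it, which is impossible since $S\subseteq\Omega^-$ makes $U_\mu<0$ on the boundary (contained in $S$) of every bounded component while $U_\mu\to+\infty$ on the unbounded one. Hence $U_\mu<0$ on $B(c_*,\rho+\epsilon)$, i.e.\ $B(c_*,\rho+\epsilon)\subseteq\Omega^-$, contradicting $\rho+\epsilon>\rho(\Omega^-)$.

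For the upper bound, fix $\epsilon>0$ and let $\eta>0$ be as above. The set $K_\eta:=\{U_\mu\ge\eta\}\cap\overline{B(0,R_0)}$ is compact (by upper semicontinuity of $U_\mu$), and since $\eta>0$ and $S\subseteq\Omega^-$ it is contained in $\Omega^+\setminus S$; so Theorem~\ref{G} gives $K_\eta\subseteq\Lambda_n^c$ with overwhelming probability for all large $n$. On the other hand, if $\rho_n>\rho+\epsilon$ then $\Lambda_n$ contains an open disk of radius $\rho+\epsilon$, which lies in $\overline{B(0,R_0)}$ and, by the second fact, contains a point $\zeta_n$ with $U_\mu(\zeta_n)\ge\eta$; then $\zeta_n\in K_\eta\cap\Lambda_n$, so $K_\eta\not\subseteq\Lambda_n^c$. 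Therefore $\PP(\rho_n>\rho+\epsilon)\le\PP(K_\eta\not\subseteq\Lambda_n^c)$, which is exponentially small for large $n$, and Borel--Cantelli gives $\rho_n\le\rho+\epsilon$ for all large $n$ a.s.; letting $\epsilon=1/k\downarrow 0$ gives $\limsup_n\rho_n\le\rho$ a.s. Combined with the lower bound, $\rho_n\to\rho$ a.s.

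The Borel--Cantelli bookkeeping and the a priori bound $\Lambda_n\subseteq\overline{B(0,R_0)}$ are routine; the one substantive point, and the only place where $S\subseteq\Omega^-$ is used, is the second deterministic fact, which says in effect that the inradius of $\{U_\mu\le 0\}$ is no larger than that of $\Omega^-$. The hard part is ruling out that $U_\mu$ is $\le 0$ (hence, by the maximum principle, $\equiv 0$) on a disk bigger than fits inside $\Omega^-$ --- that is, ruling out ``flat spots'' of $U_\mu$ on the zero set $\{U_\mu=0\}$, which $\Lambda_n$ could otherwise occupy --- and this is exactly what the hypothesis $S\subseteq\Omega^-$ prevents.
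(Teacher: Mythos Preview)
Your proof is correct and takes a genuinely different route from the paper's. Both arguments reduce the upper bound to the deterministic claim that $\rho(\{U_\mu<\eta\})\downarrow\rho$ as $\eta\downarrow 0$ (your ``second fact'' is exactly this statement in contrapositive form). The paper proves it by analyzing the geometry of the level set $\{U_\mu=0\}$: under $S\subseteq\Omega^-$ the potential is harmonic in a neighborhood of that set, so the level curve is real-analytic away from finitely many critical points, has no cusps, and hence satisfies an exterior ball condition; from this the paper deduces that the inradius of the $\epsilon$-enlargement $\Omega_\epsilon^-$ converges to $\rho$, and then connects $\{U_\mu<m\}$ to $\Omega_\epsilon^-$ via continuity of $U_\mu$ off $S$. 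You instead argue by compactness and the strong maximum principle: a hypothetical sequence of ``bad'' disks of radius $\rho+\epsilon$ has bounded centers and subconverges to a disk on which $U_\mu\le 0$; since $S\subseteq\Omega^-$ forbids $U_\mu$ from vanishing identically on any disk (via the identity theorem for harmonic functions on the component of $\C\setminus S$ containing it, together with upper semicontinuity at $\partial V\subseteq S$), that limit disk must actually lie in $\Omega^-$, contradicting the definition of $\rho$. Your route is more elementary---it avoids the level-set regularity/exterior-ball analysis entirely---and it stays within Theorem~\ref{G} alone, whereas the paper's proof as written also invokes Theorem~\ref{thm:pospot} (which carries the stronger assumption~(\ref{D})); in the setting $S\subseteq\Omega^-$ that invocation is unnecessary because the relevant compact $K_\eta$ is already disjoint from $S$, and your argument makes this explicit.
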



Ideally, we would have liked to say that a compact set $L\subseteq \Omega^+$ is contained inside $\Lambda_n^c$ w.o.p. However, this is clearly not true if some of the roots fall inside  $L$. Making the  stronger assumption $(D)$ on the measure and further assuming that $U_{\mu}$ is bounded below by a positive number on $L$,  we show that $L$ is almost entirely contained in $\Lambda_n^c$. 

\begin{thm}\label{thm:pospot}
 Let $\mu$ satisfy assumption~(\ref{D}). Let $L$ be a compact subset of $\{U_{\mu}\ge m\}$ for some $m>0$. 
Then there exists $c_0>0$ such that $$\Lambda_n\cap L \subset \bigcup_{k=1}^n B(X_k,e^{-c_0n}), \quad \text{w.o.p.}$$
\end{thm}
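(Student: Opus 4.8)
The plan is to reduce everything to a uniform lower bound on the normalised log-modulus $f_n(z):=\tfrac1n\log|p_n(z)|=\tfrac1n\sum_{k=1}^n\log|z-X_k|$, which is negative exactly on $\Lambda_n$. Concretely, it suffices to produce $c_0>0$ so that, w.o.p., $f_n(z)\ge 0$ for every $z\in L$ with $\min_{1\le k\le n}|z-X_k|\ge e^{-c_0n}$; the contrapositive is precisely $\Lambda_n\cap L\subseteq\bigcup_k B(X_k,e^{-c_0n})$. Throughout, fix $D\ge1$ with $|z-w|\le D$ for all $z\in L$, $w\in S$, write $\log_+ t:=\max(\log t,0)$, and split $\log|z-w|=A(z,w)-B(z,w)$ with $A(z,w):=\log_+|z-w|\in[0,\log D]$ and $B(z,w):=\log_+\tfrac1{|z-w|}\ge 0$, so that $\int_S\big(A(z,\cdot)-B(z,\cdot)\big)\,d\mu=U_\mu(z)$.

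The first step is to extract from assumption~(\ref{D}) a uniform sub-exponential tail for the ``bad'' part: $\PP\big(B(z,X_1)>t\big)=\mu\big(B(z,e^{-t})\big)\le Ce^{-\epsilon t}$ for all $z$ and $t\ge 0$, so the $B(z,X_1)$ are sub-exponential uniformly in $z\in L$ and $M_0:=\sup_{z\in L}\E\big[e^{(\epsilon/2)B(z,X_1)}\big]<\infty$. From this I would deduce, for each fixed $z\in L$ and each $\delta>0$, a rate $c_1=c_1(\delta,\epsilon,C,D)>0$ that \textbf{does not depend on $n$ or on the eventual choice of $c_0$}, such that with probability at least $1-e^{-c_1n}$ both
\begin{align*}
\frac1n\sum_{k=1}^nA(z,X_k)\ \ge\ \int_S A(z,\cdot)\,d\mu-\delta
\qquad\text{and}\qquad
\frac1n\sum_{k=1}^n\min\!\big(B(z,X_k),c_0n\big)\ \le\ \int_S B(z,\cdot)\,d\mu+\delta
\end{align*}
hold; the first is Hoeffding's inequality for the bounded variables $A(z,X_k)$, and the second is a Bernstein-type inequality for sums of i.i.d.\ sub-exponential variables, noting that truncating at level $c_0n$ leaves the summands sub-exponential with the same uniform constants and only decreases their common mean.

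The second step handles uniformity over $L$ by an $\epsilon$-net. The map $z\mapsto\tfrac1n\sum_kA(z,X_k)$ is $1$-Lipschitz, whereas $z\mapsto\tfrac1n\sum_k\min(B(z,X_k),c_0n)$ is only $e^{c_0n}$-Lipschitz, since $t\mapsto\min(\log_+\tfrac1t,c_0n)$ has Lipschitz constant $e^{c_0n}$. I would therefore set $c_0:=c_1/4$, take a $(\delta e^{-c_0n})$-net $\mathcal N\subseteq L$ of $L$ (with at most $C_L\,\delta^{-2}e^{2c_0n}$ points, $C_L$ depending only on $L$), and apply a union bound: the two displayed inequalities hold simultaneously at every point of $\mathcal N$ except on an event of probability at most $C_L\delta^{-2}e^{2c_0n}\cdot e^{-c_1n}$, which is $\le e^{-c'n}$ for a suitable $c'>0$ and all large $n$, i.e.\ w.o.p. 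Interpolating via the Lipschitz bounds, on the good event every $z\in L$ satisfies
\begin{align*}
\frac1n\sum_{k=1}^nA(z,X_k)-\frac1n\sum_{k=1}^n\min\!\big(B(z,X_k),c_0n\big)\ \ge\ U_\mu(z_i)-4\delta\ \ge\ m-4\delta,
\end{align*}
where $z_i\in\mathcal N\subseteq L\subseteq\{U_\mu\ge m\}$ is a closest net point. Finally, if $z\in L$ also satisfies $\min_k|z-X_k|\ge e^{-c_0n}$, then $B(z,X_k)\le c_0n$ for all $k$, the truncation is vacuous, and the left-hand side above is exactly $f_n(z)$; taking $\delta:=m/8$ yields $f_n(z)\ge m/2>0$, hence $z\notin\Lambda_n$, which completes the argument.

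The step I expect to be the main obstacle is the interaction between the last two: running an $\epsilon$-net over the whole of $L$ requires a Lipschitz surrogate for the unbounded-below quantity $\tfrac1n\sum_k\log_+\tfrac1{|z-X_k|}$, which forces truncation at the $n$-dependent scale $e^{-c_0n}$ and hence a net of exponential cardinality $e^{\Theta(c_0n)}$; this survives the union bound only because the per-point exponential rate $c_1$ is a genuine constant independent of $c_0$. Guaranteeing such a rate is exactly where the strength of assumption~(\ref{D}) is used: a naive Bernstein bound exploiting only the $O(c_0n)$ range of the truncated summands would produce a rate of order $1$ rather than order $n$, which the union bound would wipe out. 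Minor care is also needed to keep the net points inside $L$ (so that $U_\mu\ge m$ there) and to verify that the covering constant $C_L$ and the diameter bound $D$ depend only on the fixed compact set $L$ and the support $S$.
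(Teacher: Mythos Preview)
Your proposal is correct and follows essentially the same strategy as the paper: a pointwise sub-exponential concentration bound for $\tfrac1n\log|p_n(z)|$ derived from assumption~(\ref{D}), combined with an exponential-size net in $L$ and the choice of $c_0$ as a fixed fraction of the concentration rate so that the union bound survives. The only technical difference is that the paper controls the oscillation directly via the gradient bound $|\nabla\log|p_n(z)||\le n/r_n$ on $L\setminus\bigcup_k B(X_k,r_n)$, whereas you introduce the truncated Lipschitz surrogate $\tfrac1n\sum_k\min(B(z,X_k),c_0n)$; both devices serve the same purpose and lead to the same quantitative tradeoff.
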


In particular, if $U_{\mu}\ge m$ everywhere, then the whole lemniscate is small. It suffices to assume that $U_{\mu}\ge m$ on the support of $\mu$, by the minimum principle for potentials (Theorem~3.1.4 in \cite{ransford}). 
\begin{cor}\label{cor:smalllemniscate} 
Suppose $\mu$ satisfies assumption~(\ref{D}) and $U_{\mu}\ge m$ on $S$. Then there is a $c_0>0$ such that $\Lambda_n\subset \bigcup_{k=1}^n B(X_k,e^{-c_0n})$ and  $\rho_n\le ne^{-c_0n}$ w.o.p.
\end{cor}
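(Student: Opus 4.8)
The plan is to deduce the corollary directly from Theorem~\ref{thm:pospot} by exhibiting a single fixed compact set that contains the entire lemniscate. First I would note that $\Lambda_n$ is trapped in a compact set \emph{deterministically}: if $R>0$ is chosen so that $S\subset\overline{B}(0,R)$, then for every realization of the roots and every $z$ with $|z|\ge R+1$ we have $|p_n(z)|=\prod_{k=1}^n|z-X_k|\ge(|z|-R)^n\ge 1$, so $\Lambda_n\subset L:=\overline{B}(0,R+1)$ for all $n$. Next, by the minimum principle for potentials (Theorem~3.1.4 in \cite{ransford}), since $U_\mu\ge m$ on $S$ and $U_\mu(z)\to+\infty$ as $|z|\to\infty$, in fact $U_\mu\ge m$ on all of $\C$; in particular $L$ is a compact subset of $\{U_\mu\ge m\}$.

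Now $\mu$ satisfies~(\ref{D}) and $L\subset\{U_\mu\ge m\}$ is compact, so Theorem~\ref{thm:pospot} provides a constant $c_0>0$ with $\Lambda_n\cap L\subset\bigcup_{k=1}^n B(X_k,e^{-c_0 n})$ w.o.p. Intersecting this overwhelmingly likely event with the sure event $\Lambda_n\subset L$ gives $\Lambda_n\subset\bigcup_{k=1}^n B(X_k,e^{-c_0 n})$ w.o.p., which is the first assertion. For the inradius, on this same event any disk $B(z_0,r)\subset\Lambda_n$ is covered by $n$ disks of radius $e^{-c_0 n}$, so comparing areas $\pi r^2\le n\pi e^{-2c_0 n}$, whence $r\le\sqrt n\,e^{-c_0 n}\le n e^{-c_0 n}$ (using $n\ge 1$); taking the supremum over all inscribed disks yields $\rho_n\le n e^{-c_0 n}$ w.o.p.

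Given Theorem~\ref{thm:pospot}, there is essentially no obstacle to overcome here; the only point requiring a little care is that the compact set fed into Theorem~\ref{thm:pospot} must be chosen uniformly in $n$ and in the randomness, which is exactly why the argument uses the deterministic a priori containment $\Lambda_n\subset\overline{B}(0,R+1)$ rather than a merely high-probability bound on the locations of the roots. (One could also simply absorb the harmless factor $\sqrt n$, or $n$, into $c_0$ by shrinking it slightly.)
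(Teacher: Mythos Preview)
Your proof is correct. It is essentially the deduction the paper hints at in the paragraph preceding the corollary (``It suffices to assume that $U_{\mu}\ge m$ on the support of $\mu$, by the minimum principle\ldots''), and every step is sound: the deterministic containment $\Lambda_n\subset \overline{B}(0,R+1)$ gives a fixed compact $L$, the minimum principle gives $U_\mu\ge m$ globally so $L\subset\{U_\mu\ge m\}$, Theorem~\ref{thm:pospot} applies, and the area bound yields the inradius estimate.

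However, the paper deliberately does \emph{not} argue this way. It announces at the start of Section~\ref{sec:pospot} that the corollary will be proved ``by a different method, which may be of independent interest,'' and only afterwards sketches the net argument that establishes Theorem~\ref{thm:pospot}. The paper's proof of the corollary instead controls $|p_n'(X_i)|$ at each root: Lemma~\ref{lemma:logP'} shows $|p_n'(X_i)|\ge e^{\frac{m}{2}(n-1)}$ w.o.p., and then Pommerenke's Bernstein-type inequality $|p_n'(X_i)|\le Cn^2/\mathrm{diam}(G_i)$ (with $G_i$ the component of $\Lambda_n$ containing $X_i$) forces $\mathrm{diam}(G_i)\le Cn^2e^{-\frac{m}{2}(n-1)}$. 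Your route is shorter and purely logical once Theorem~\ref{thm:pospot} is in hand; the paper's route is self-contained (it does not rely on Theorem~\ref{thm:pospot}), gives the extra information that the derivative at the roots is exponentially large, and illustrates a distinct technique linking component size to the derivative via Bernstein's inequality.
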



A class of examples illustrating Theorem~\ref{G} and Theorem~\ref{thm:pospot} is given at the end of the section.

What happens when the potential $U_{\mu}$ vanishes on a non-empty open set? In this case $\log|p_n|$ has zero mean, and is (approximately) equally likely to be positive or negative. Because of this, one may expect that the randomness in $\Lambda_n$ and $\rho_n$ persists in the limit and we can at best hope for a convergence in distribution. The particular case  when $\mu$ is uniform on the unit circle  is dealt with in the following theorem. 
\begin{thm}\label{thm:unifcirc}
Let $\mu$ be the uniform probability measure on $\mathbb{S}^1$,  the unit circle in the complex plane.  Then, $\rho_n\stackrel{d}{\rightarrow} \rho$ for some random variable $\rho$ taking values in $(0,\frac12)$. Further, $\mathbb P\{\rho<\epsilon\}>0$ and $\mathbb P\{\rho>\frac12 -\epsilon\}>0$ for every $\epsilon>0$.
\end{thm}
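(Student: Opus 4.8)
The plan is to linearise $\log|p_n|$ inside the disk. Since $|X_k|=1$ we have $|z-X_k|=|1-\overline{X_k}z|$, so for $z\in\D$
\[
\log|p_n(z)|=\sum_{k=1}^n\log|1-\overline{X_k}z|=-\Re\sum_{m\ge1}\frac{z^m}{m}S_m,\qquad S_m:=\sum_{k=1}^n\overline{X_k}^{\,m},
\]
and hence $\Lambda_n\cap\D=\{z\in\D:\Re h_n(z)>0\}$ where $h_n(z):=\tfrac1{\sqrt n}\sum_{m\ge1}\frac{S_m}{m}z^m$ is a random element of $\sO(\D)$. The variables $\overline{X_k}^{\,m}$ ($m\ge1$) have mean $0$, $\E|\overline{X_k}^{\,m}|^2=1$, $\E\,\overline{X_k}^{\,m}X_k^{\,m'}=\delta_{mm'}$ and $\E\,\overline{X_k}^{\,m}\,\overline{X_k}^{\,m'}=0$, so by the multivariate CLT $(S_1/\sqrt n,\dots,S_M/\sqrt n)\stackrel{d}{\rightarrow}(Z_1,\dots,Z_M)$ with $(Z_m)$ i.i.d.\ standard complex Gaussians ($\E|Z_m|^2=1$, $\E Z_m^2=0$); together with the uniform bound $\E|S_m/\sqrt n|^2=1$ (which gives tightness in $\sO(\D)$ via Cauchy estimates) this yields $h_n\stackrel{d}{\rightarrow}h$ in $\sO(\D)$ for the topology of uniform convergence on compacts, where $h(z):=\sum_{m\ge1}\frac{Z_m}{m}z^m$. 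Separately, for $\mu$ uniform on $\mathbb{S}^1$ one has $U_\mu(z)=\max(\log|z|,0)$, so $\Omega^-=\emptyset$ and $\Omega^+=\{|z|>1\}$; the $\varepsilon$-net/large-deviation argument from the proof of Theorem~\ref{G} applied to $\Omega^+$ (using that $\int(\log|z-w|)^2\,d\mu(w)$ stays bounded for $z$ near $\mathbb{S}^1$) shows $\Lambda_n\subset\{|z|<1+n^{-1/4}\}$ w.o.p., hence $|\rho_n-\rho(\Lambda_n\cap\D)|\le n^{-1/4}$ w.o.p.

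Next I would show that $f\mapsto\rho(\{z\in\D:\Re f(z)>0\})$ is continuous at every non-constant $f\in\sO(\D)$ for uniform convergence on compacts. Lower semicontinuity is immediate: if $\overline{B(w,r)}\subset\{z\in\D:\Re f(z)>0\}$ then $\Re f\ge c>0$ there, so $\Re f_k>0$ there once $f_k\to f$. For upper semicontinuity take near-optimal disks $B(w_k,r_k)\subset\{\Re f_k>0\}$ with $w_k\to w$, $r_k\to r^{*}$; then for each $\delta>0$ we have $B(w,r^{*}-\delta)\subset B(w_k,r_k)$ for large $k$ and $\overline{B(w,r^{*}-\delta)}\subset\D$, so $\Re f\ge0$ on this closed ball, and the minimum principle for harmonic functions (using $f$ non-constant) forces $\Re f>0$ on the open ball, whence $\rho(\{\Re f>0\})\ge r^{*}-\delta$. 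Since $h$ is a.s.\ non-constant, combining this with the reduction above (and Skorokhod representation) gives $\rho_n\stackrel{d}{\rightarrow}\rho:=\rho(\{z\in\D:\Re h(z)>0\})$.

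It remains to analyse the law of $\rho$. Because $h(0)=0$, the origin never lies in $\{\Re h>0\}$, so a disk inside this subset of $\D$ avoids $0$ and thus has radius $\le\tfrac12$; and $\{\Re h>0\}$ is a.s.\ a nonempty open set (as $h$ is non-constant with $h(0)=0$, its image is open and meets the right half-plane), so $\rho>0$ a.s. The strict bound $\rho<\tfrac12$ a.s.\ follows from a genericity argument excluding the extremal configuration $B(w_0,\tfrac12)\subset\{\Re h>0\}$ with $|w_0|=\tfrac12$. For the upper tail: the inscribed disk of a radius-$1$ half-disk has radius $\tfrac12$, so on the positive-probability event that $|Z_1|$ is large while $\sup_{|z|\le1-\epsilon/2}|h(z)-Z_1z|$ is bounded (this supremum is a.s.\ finite on the compact subdisk and is independent of $Z_1$), $\{\Re h>0\}$ contains the radius-$(\tfrac12-\epsilon)$ disk inscribed in $\{\Re(Z_1z)>0\}\cap\D$, so $\rho\ge\tfrac12-\epsilon$; hence $\PP\{\rho>\tfrac12-\epsilon\}>0$ for every $\epsilon>0$.

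Finally, $\PP\{\rho<\epsilon\}>0$. Fix $M>\pi/\epsilon$ and then a small $\kappa>0$ so that the arcs of $\{\theta:\cos(M\theta+\psi)\ge-\kappa\}$ have angular width $<2\epsilon$; note the "pinwheel" $\{z\in\D:\Re(z^M)>0\}$ has $M$ sectors of width $\pi/M$ and inradius $\le\pi/(2M)<\epsilon$. On $\{|z|\le1-\epsilon/2\}$ the remainder $\sum_{m\ne M}\tfrac{Z_m}{m}z^m$ has supremum with finite mean (it is $\le\sum_{m\ne M}\tfrac{|Z_m|}{m}(1-\epsilon/2)^m$), hence is $\le M_0$ with positive probability for a suitable $M_0$; since $Z_M$ is independent of this remainder and $\PP\{|Z_M|>T\}>0$ for all $T$, on the event $\{|Z_M|>MM_0\epsilon^{-M}\kappa^{-1}\}$ intersected with the previous one we get, for every $r\in[\epsilon,1-\epsilon/2]$ and every $\theta$ with $\cos(M\theta+\arg Z_M)\le-\kappa$, that $\Re h(re^{i\theta})\le\tfrac{r^M|Z_M|}{M}\cos(M\theta+\arg Z_M)+M_0<0$. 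Thus on this positive-probability event, for every such $r$ the set $\{\theta:\Re h(re^{i\theta})>0\}$ lies in a union of $M$ arcs of width $<2\epsilon$. But any disk $B(w,\epsilon)\subset\{\Re h>0\}\subset\D$ has $\epsilon\le|w|\le1-\epsilon$, and slicing it by the circle $\{|z|=|w|\}$ (through its centre, so $|w|$ stays bounded away from $0$) yields a connected arc of that circle inside $\{\Re h>0\}$ of width $2\arccos\!\big(1-\tfrac{\epsilon^2}{2|w|^2}\big)>2\epsilon$, a contradiction. Hence $\rho\le\epsilon$ on a positive-probability event, giving $\PP\{\rho<\epsilon\}>0$ for all $\epsilon>0$; the point requiring the most care is precisely this last step near $\partial\D$, where $h\notin H^\infty$ a.s.\ and one must avoid controlling $h$ on circles too close to $\mathbb{S}^1$, which is why the slice is taken through the centre of the candidate disk.
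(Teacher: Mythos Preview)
Your overall strategy for the convergence statement matches the paper's: reduce to $\Lambda_n\cap\D$, show that the normalized log-modulus converges (as a random function on $\D$) to a Gaussian limit via CLT plus a tightness bound, apply Skorokhod, and verify continuity of the inradius functional at the limit. Your power-series realization $h_n(z)=\tfrac1{\sqrt n}\sum_{m\ge1}\tfrac{S_m}{m}z^m$ is essentially the paper's process written in coordinates (the paper works with $g_n=\tfrac{1}{\sqrt n}\log|p_n|$ and later exhibits the limit as $\Re\sum_k\tfrac{2}{k}a_kz^k$). One small point: your containment $\Lambda_n\subset\{|z|<1+n^{-1/4}\}$ does not follow from Theorem~\ref{G} as stated, since the compact set $L$ there is fixed; the paper is content with $\Lambda_n\subset(1+\epsilon)\D$ w.o.p.\ for each fixed $\epsilon>0$, which already suffices by a Slutsky-type argument. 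Your continuity lemma (non-constant holomorphic $f$ plus the strong minimum principle) is a clean variant of the paper's Lemma~\ref{lem:inradiusconvergence}, which instead assumes $\{f=0\}\cap\{\nabla f=0\}=\emptyset$; both hypotheses hold a.s.\ for the limit.

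Where you genuinely diverge is in the tail probabilities $\PP\{\rho>\tfrac12-\epsilon\}>0$ and $\PP\{\rho<\epsilon\}>0$. The paper proceeds abstractly: it cites an external construction (Wagner~\cite{wagner}; also \cite{NSP}) of a harmonic $u_0$ on $\D$ with $u_0(0)=0$ that is negative on $(1-\epsilon)\D\cap\{|\Im z|>\epsilon\}$, and then shows that the Gaussian process approximates any harmonic $u$ with $u(0)=0$ uniformly on $r\D$ with positive probability (by forcing finitely many coefficients close to prescribed values and the tail to be small). Your route is more concrete and self-contained: force a single coefficient $Z_M$ to dominate, with $M=1$ producing a near-half-disk (giving $\rho>\tfrac12-\epsilon$) and $M>\pi/\epsilon$ producing a pinwheel of $M$ thin sectors (giving small $\rho$), exploiting the independence of $Z_M$ from the remainder $\sum_{m\ne M}\tfrac{Z_m}{m}z^m$ on a fixed compact subdisk. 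Your approach avoids external citations and is pleasantly explicit; the paper's approach is less explicit, but yields the reusable statement that $g$ can be made uniformly close to \emph{any} harmonic function vanishing at the origin. Two minor clean-ups in your write-up: the pinwheel argument as written gives only $\rho\le\epsilon$, so run it with $\epsilon/2$; and your ``genericity argument'' for the strict inequality $\rho<\tfrac12$ a.s.\ is only a gesture --- the paper, incidentally, does not spell this point out either.
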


As shown in the proof of Theorem \ref{thm:unifcirc}, the random function $\log |p_n(z)|$ converges, after appropriate normalization, almost surely to a nondegenerate Gaussian random function on $\D$, and this convergence underlies the limiting random inradius $\rho$.
We note that similar methods can be used to study other measures $\mu$ for which $U_\mu$ vanishes on  non-empty open set (such as other instances where $\mu$ is the equilibrium measure of a region with unit capacity), however the case of the uniform measure on the circle is rather special, as the resulting random function $\log |p_n(z)|$ as well as its limiting Gaussian random function has a deterministic zero at the origin (which is responsible for the limiting inradius taking values only up to half the radius of $\D$.

Another setting where one can rely on convergence of the defining function $\log|p_n(z)|$ is in the case when the polynomial $p_n$ has i.i.d. Gaussian coefficients.  Actually, the convergence in this case is more transparent (and does not require additional tools such as Skorokhod's Theorem) as $p_n$ can already be viewed as the truncation of a power series with i.i.d. coefficients.  This case has a similar outcome as in Theorem \ref{thm:unifcirc}, except the value $1/2$ is replaced by $1$ due to the absence of a deterministic zero.

One can ask for results analogous to Theorems \ref{G} and \ref{thm:pospot} when the zeros are dependent random variables. A natural class of examples are determinantal point processes. We consider one special case here.

The Ginibre ensemble is a random set of $n$ points in $\C$ with joint density proportional to
\begin{align}\label{eq:ginijtdensity}
e^{-\sum_{k=1}^n|\lambda_k|^2} \prod_{j<k}|\lambda_j-\lambda_k|^2.
\end{align}
This arises in random matrix theory, as the distribution of eigenvalues of an $n\times n$ random matrix whose entries are i.i.d. standard complex Gaussian. After scaling by $\sqrt{n}$, the empirical distributions
\begin{equation}\label{esd}
\mu_n = \dfrac{1}{n}\sum_{j=1}^{n}\delta_{\frac{\lambda_j}{\sqrt{n}}}   
\end{equation} 
converge to the uniform measure on $\D$. Hence, we may expect the lemniscate of the corresponding polynomial to be similar to the case when the roots are sampled independently and uniformly from $\D$.
\begin{thm}\label{Gin}
Let $\lambda_1,\ldots ,\lambda_n$ have joint density given by \eqref{eq:ginijtdensity} and let $X_j = \frac{\lambda_j}{\sqrt{n}}$. Let $\Lambda_n$ be the  unit lemniscate of the random polynomial $p_n(z) = \prod_{k=1}^{n}(z-X_k)$. Given $r\in (0, 1)$ and $s\in (1, \infty)$, we have for large $n$,
$$\mathbb{D}_r\subseteq\Lambda_n\subseteq \D_{s}, \hspace{0.1in}\mbox{w.o.p.} $$
\end{thm}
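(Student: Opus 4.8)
The plan is to reprise the structure of the proof of Theorem~\ref{G} — pointwise concentration of $\frac1n\log|p_n|$ around a logarithmic potential, promoted to a uniform statement by an $\varepsilon$-net together with a derivative bound — but with the i.i.d.\ large deviation input replaced by large deviation estimates for the Ginibre ensemble. Write $f_n(z) := \frac1n\log|p_n(z)| = \int\log|z-w|\,d\mu_n(w)$ with $\mu_n$ as in \eqref{esd}, and let $\nu$ denote the circular law (uniform probability measure on $\D$), with logarithmic potential $U := U_{\nu}$. A direct computation gives $U(z) = \tfrac12(|z|^2-1)$ for $|z|\le1$ and $U(z) = \log|z|$ for $|z|\ge1$, so $U<0$ on $\D$ and $U>0$ on $\C\setminus\overline{\D}$; since $\Lambda_n = \{f_n<0\}$, the two inclusions are equivalent to the assertions that, w.o.p., $f_n<0$ on $\overline{\D_r}$ and $f_n\ge0$ on $\{|z|\ge s\}$.

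First I would record two deterministic facts on the event $E := \{|X_j|\le 1+\epsilon\ \text{for all }j\}$, where $\epsilon>0$ is a small constant to be chosen in terms of $r,s$ (in particular $\epsilon<s-1$): (i) $|p_n(z)|\le (|z|+1+\epsilon)^n$, and hence by a Cauchy estimate $\|p_n'\|_{\overline{\D_R}}\le e^{An}$ for each fixed $R$, with $A=A(R,\epsilon)$; and (ii) for $|z|\ge R$ with $R>2+\epsilon$ fixed, $f_n(z)\ge\log(R-1-\epsilon)>0$. That $E$ holds w.o.p.\ follows from the classical fact (Kostlan) that the moduli $\{|\lambda_j|^2\}$ of the Ginibre eigenvalues have the law of $\{\Gamma_1,\dots,\Gamma_n\}$ with the $\Gamma_k$ independent and $\Gamma_k\sim\mathrm{Gamma}(k,1)$: since each $\Gamma_k$ ($k\le n$) is stochastically dominated by $\Gamma_n$, a Chernoff bound and a union bound give $\PP(\max_j|X_j|>1+\epsilon)\le n\,e^{-cn}$.

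The key probabilistic input is that $\mu_n$ obeys a large deviation principle at \emph{speed $n^2$} with a good rate function (the planar Coulomb energy) uniquely minimised by $\nu$ — this is the Ben Arous--Zeitouni large deviation theorem for the Ginibre ensemble. It yields, for any fixed bounded continuous $\phi$ and any $\delta>0$, that $\PP(|\int\phi\,d\mu_n-\int\phi\,d\nu|>\delta)\le e^{-cn^2}$ for large $n$. For $s\le|z|\le R$ this applies to $f_n(z)$ directly: on $E$ the function $w\mapsto\log|z-w|$ is bounded and Lipschitz on $\overline{\D_{1+\epsilon}}$ (Lipschitz constant $\le(s-1-\epsilon)^{-1}$), so extending it to a bounded continuous function on $\C$ we obtain $\PP(f_n(z)<\tfrac12\log s)\le e^{-cn^2}+\PP(E^c)$ for each such $z$. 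For $|z|\le r$ one must absorb the logarithmic singularity, as roots accumulate in $\D$: with $g^z_M(w):=\max(\log|z-w|,-M)$ (further truncated above so as to be bounded, which changes nothing on $\overline{\D_{1+\epsilon}}$) we have $f_n(z)\le\int g^z_M\,d\mu_n$ on $E$, while $\int g^z_M\,d\nu\downarrow U(z)$ as $M\to\infty$, uniformly on $\overline{\D_r}$ by Dini's theorem; choosing $M$ so that $\int g^z_M\,d\nu<U(z)+\delta_0$ on $\overline{\D_r}$ for a suitable constant $\delta_0=\delta_0(r)>0$ with $U(z)+2\delta_0<0$ there, and applying the speed-$n^2$ bound to the fixed, uniformly bounded, equicontinuous family $\{g^z_M\}_{|z|\le r}$ (equicontinuity making the rate constant uniform in $z$ by a standard compactness argument), one gets $\PP(|p_n(z)|>e^{-\delta_0 n})\le e^{-cn^2}$ for each fixed $z\in\overline{\D_r}$.

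It remains to assemble the pointwise bounds with an $\varepsilon$-net. For $\Lambda_n\subseteq\D_s$: cover $\{s\le|z|\le R\}$ by a net of fixed mesh $\eta$ — a bounded number of points — small enough that the Lipschitz bound on $E$ propagates $f_n(z_i)\ge\tfrac12\log s$ to $f_n>0$ on each $\eta$-ball; a union bound over $O(1)$ points, together with fact (ii) on $\{|z|\ge R\}$ and $\PP(E^c)\le e^{-cn}$, gives $f_n\ge0$ on $\{|z|\ge s\}$ w.o.p. For $\D_r\subseteq\Lambda_n$: cover $\overline{\D_r}$ by a net of mesh $\eta=e^{-(A+\delta_0)n}$, so $O(e^{2(A+\delta_0)n})$ points, and on $E$ use $|p_n(z)-p_n(z_i)|\le e^{An}\eta=e^{-\delta_0n}$ to upgrade $|p_n(z_i)|\le e^{-\delta_0n}$ to $|p_n|<1$ throughout $\overline{\D_r}$; the union bound costs a factor $e^{2(A+\delta_0)n}$, harmless against the per-point bound $e^{-cn^2}$, so $\D_r\subseteq\Lambda_n$ w.o.p. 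The step I expect to be the main obstacle — and the reason one cannot simply quote Theorem~\ref{G} — is this last one: inside $\D$ the roots accumulate, so $f_n$ is only controlled with an $e^{O(n)}$ derivative bound (it is a degree-$n$ polynomial, after all), which forces an exponentially fine net whose entropy must be beaten by the per-point deviation estimate. This is exactly why one needs the speed-$n^2$ Coulomb-gas large deviation principle rather than the speed-$n$ estimates that suffice for independent roots; the logarithmic singularity additionally forces the one-sided truncation in the third step.
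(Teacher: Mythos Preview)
Your argument is correct, but for the inner inclusion $\D_r\subseteq\Lambda_n$ it takes a different route from the paper's. You invoke the speed-$n^2$ Coulomb-gas large deviation principle (Ben Arous--Zeitouni / Hiai--Petz) for the one-sided truncation $g^z_M$, then pay for an exponentially fine net using the crude a priori bound $|p_n'|\le e^{An}$. The paper instead uses the \emph{speed-$n$} concentration inequality of Pemantle--Peres for Lipschitz statistics of determinantal point processes: it truncates $\log|z-w|$ on both sides (the lower cutoff only increases the sum, so the discarded piece $\mathcal L_3$ is $\le 0$; the upper cutoff is controlled via Kostlan's lemma), obtains a bounded linear statistic, and gets $\PP(\log|p_n(z)|\ge -\tfrac{t}{4}n)\le e^{-c_t n}$ pointwise. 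The net is then handled by Lemma~\ref{net} (Pommerenke--Bernstein), which requires only $O(n^2)$ boundary points---polynomially many---so the speed-$n$ bound suffices. Thus your closing claim that ``one needs the speed-$n^2$ Coulomb-gas large deviation principle rather than the speed-$n$ estimates'' is not quite right: the paper gets by with speed $n$ by exploiting the self-referential Bernstein inequality $|p'|\le Cn^2\|p\|_{\partial G}$ instead of the crude $e^{An}$ bound. Your approach trades that trick for a heavier probabilistic hammer; it is equally valid and arguably more uniform (the same LDP drives both inclusions), while the paper's is more elementary in its probabilistic input. For the outer inclusion $\Lambda_n\subseteq\D_s$ the two arguments are essentially the same, both using the speed-$n^2$ estimate on a bounded net once Kostlan confines the roots.
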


\medskip
\begin{example}\label{example}
Let $\mu_r$ be the normalized area measure on the  disk $r\D$  and suppose the roots are sampled from $\mu_r$.  It is easy to check that $\mu_r$ satisfies assumptions $(A)-(D)$. We claim that
\be\label{eq:diskpot}
U_{\mu_r}(z) = \begin{cases} \frac{|z|^2-r^2}{2r^2}+\log r &\mbox{ if } |z|<r, \\ \log|z| & \mbox{ if }|z|\ge r. \end{cases}
\ee
Therefore, $\Omega^{-}=r_c\D$ where 
$$
r_c=\begin{cases} 1 & \mbox{ if }r\le 1, \\ r\sqrt{1-2\log r} & \mbox{ if }1\le r\le \sqrt{e}, \\ 0 &\mbox{ if }r\ge \sqrt{e}.\end{cases}
$$  
Hence Theorem \ref{G} implies that when $r<\sqrt{e}$, any disk $\D_s$ with radius $s <r_c$ is contained in $\Lambda_n$ with overwhelming probability as $n \rightarrow \infty$. When $r\le 1$, Corollary~\ref{cor:convergenceofinradius} implies that $\Lambda_n$ is almost the same as $\Omega^{-}=\D$. For $r>\sqrt{e}$, Theorem~\ref{thm:pospot} applies to show that $\Lambda_n$ is contained in a union of very small disks.

Let us carry out the computations to verify \eqref{eq:diskpot}. By rescaling, it is clear that $U_{\mu_r}(z)=\log r+U_{\mu_1}(z/r)$, hence it suffices to consider $r=1$. 
$$U_{\mu_1}(z)= \frac{1}{\pi}\int_{\D}\log|z-w|dA(w). $$
For $|z|\ge 1$, the integrand is harmonic with respect to $w\in \D$, hence $U_{\mu}(z)=\log|z|$ by the mean-value theorem. For $|z|<1$, we separate the integral over the two regions where $|w|<|z|$ and $|w|>|z|$.
Harmonicity of $w\mapsto \log|z-w|$ on $\{|w|<|z|\}$ and the mean-value property gives
$$\int_{|w|<|z|}\log|z-w|dA(w) = \pi|z|^2 \log|z|. $$
We switch to polar coordinates $w=r e^{i\theta}$ for the second integral.
\begin{align*}
    \int_{|z|}^1\int_{0}^{2\pi}\log|z-re^{i\theta}| r d\theta dr &=   \int_{|z|}^1 \underbrace{\int_{0}^{2\pi}\log|z e^{-i\theta}-r| d\theta}_{2\pi \log r} r dr \\
    &=   \int_{|z|}^1 2\pi r \log r dr \\
    &=   2\pi \left( \frac{1}{4} - \frac{|z|^2}{2} \log |z| +\frac{|z|^2}{4} \right),
\end{align*}
where we have again used the mean value property (this time over a circle) for harmonic functions to compute the inside integral in the first line above.
Combining these integrals over the two regions and dividing by $\pi$ we arrive at \eqref{eq:diskpot}. 
\end{example}
\vspace{0.1in}

\begin{figure}[h]
\centering
\includegraphics[scale=0.4]{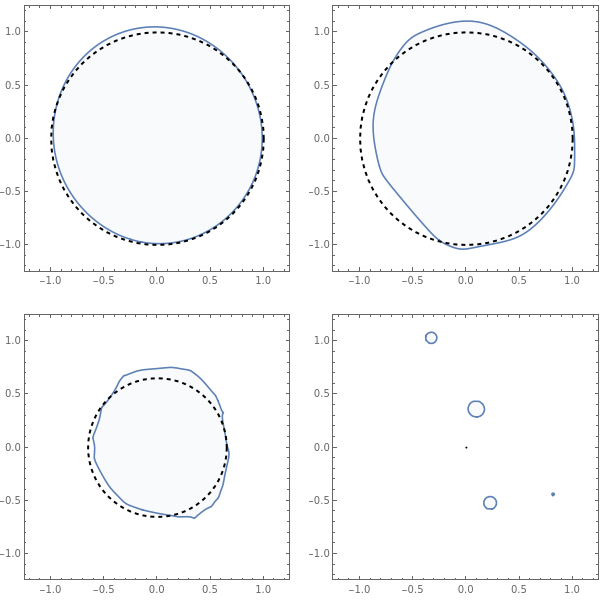}
\caption{Lemniscates of degree $n=30,40,400,15$ with zeros sampled uniformly from the  disks of radii $0.5,1,1.5,1.7$ (order: from  top-left to bottom-right). The dotted circle has radius $r_c$.}
\label{fig:disk1}
\end{figure}


\begin{figure}[h]
\centering
\includegraphics[scale=0.17]{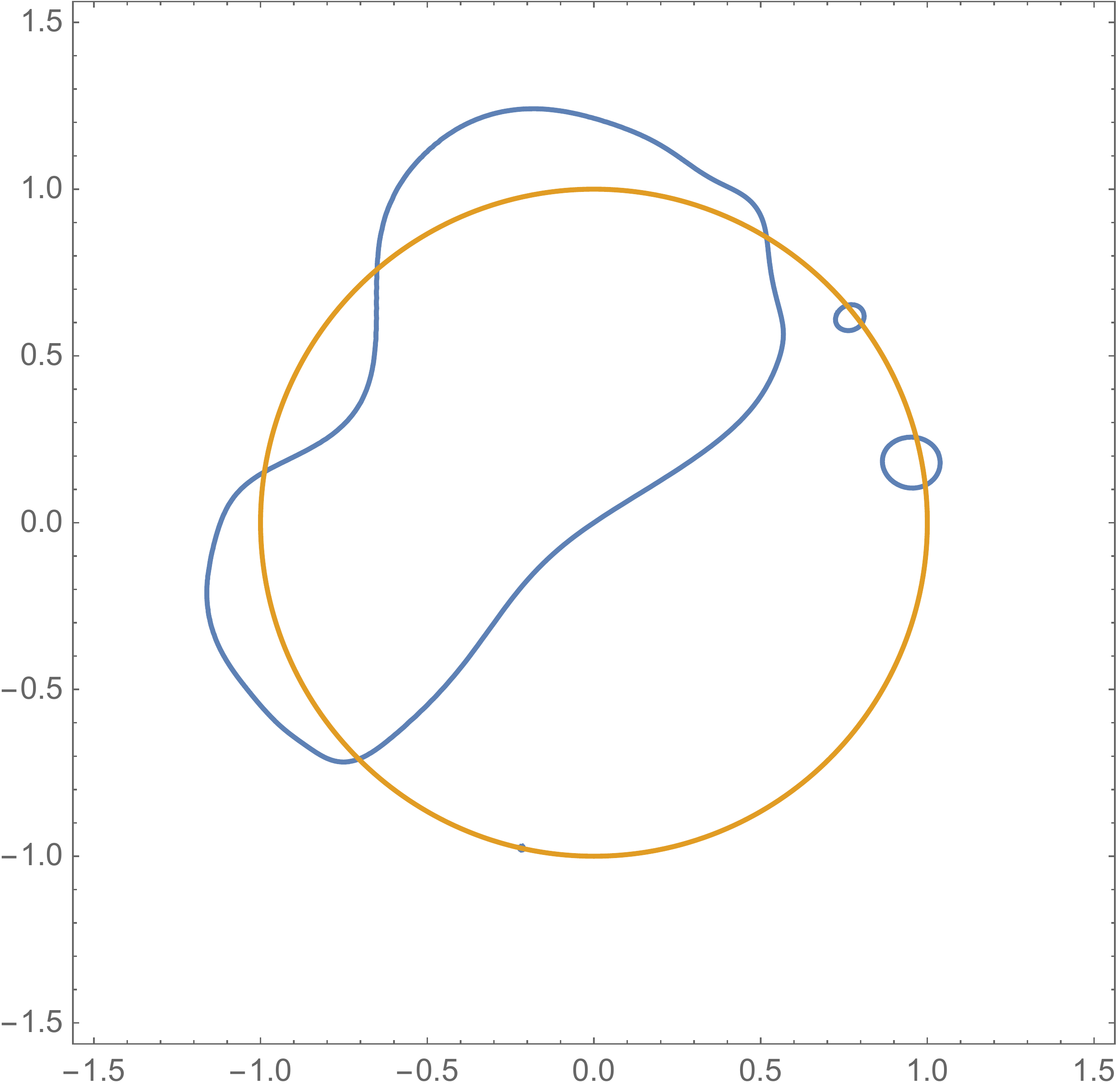}
\includegraphics[scale=0.17]{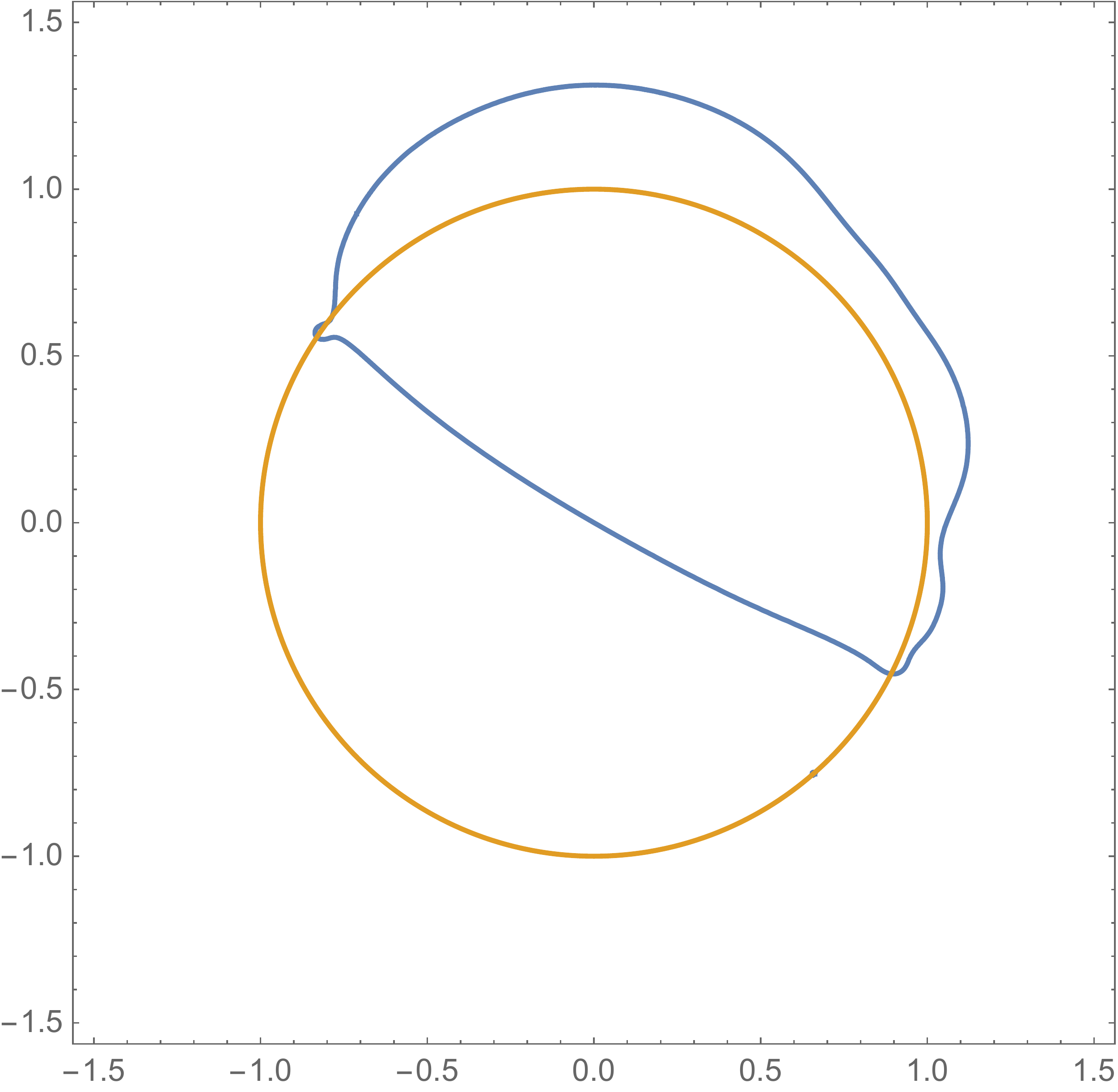}
\includegraphics[scale=0.17]{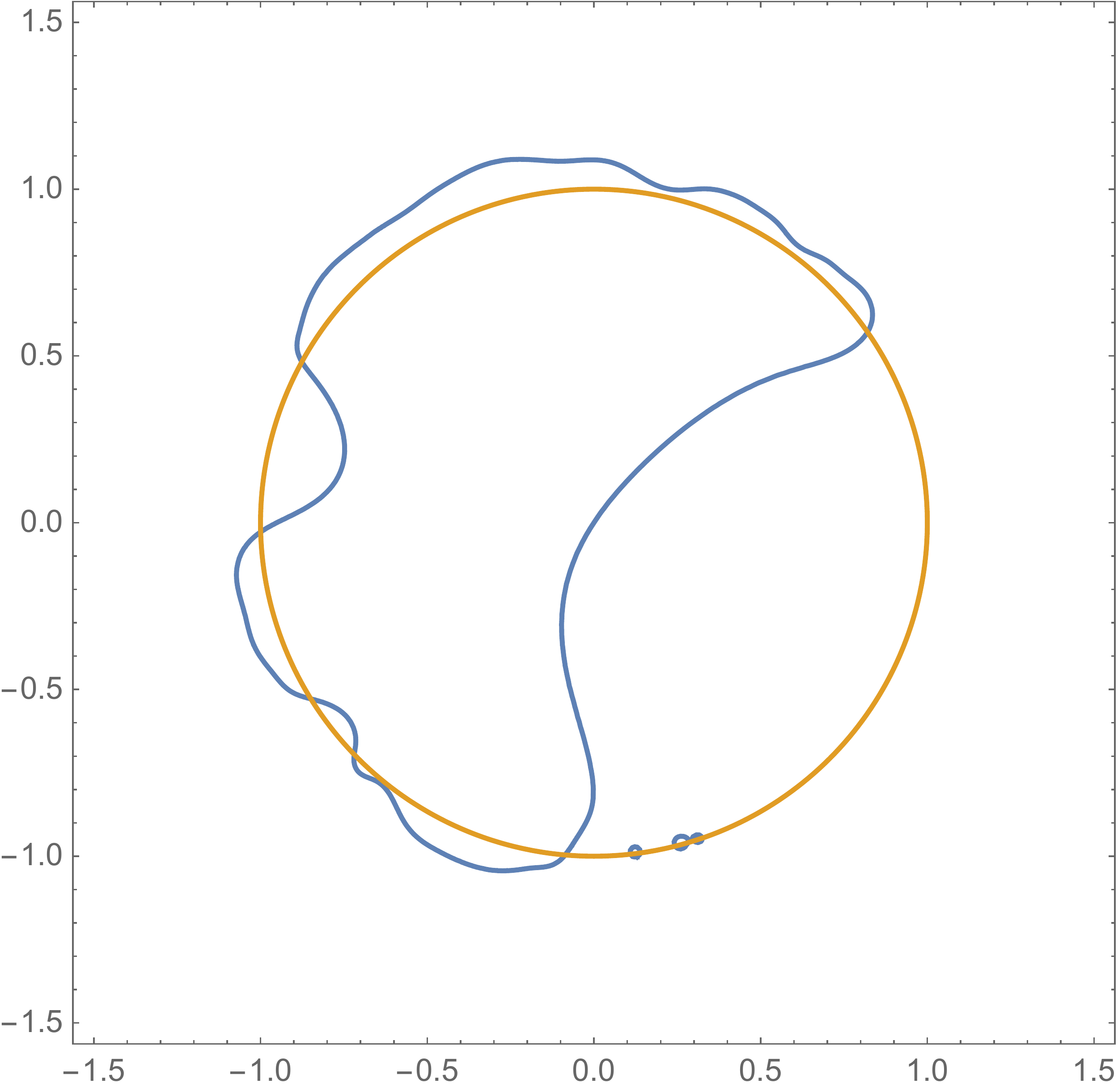}
\caption{Lemniscates of degree $n=20,30,40$ with zeros sampled uniformly from the unit circle. A unit circle is also plotted for reference in each case.}
\label{fig:circle}
\end{figure}

\begin{figure}[h]
\centering
\includegraphics[scale=0.17]{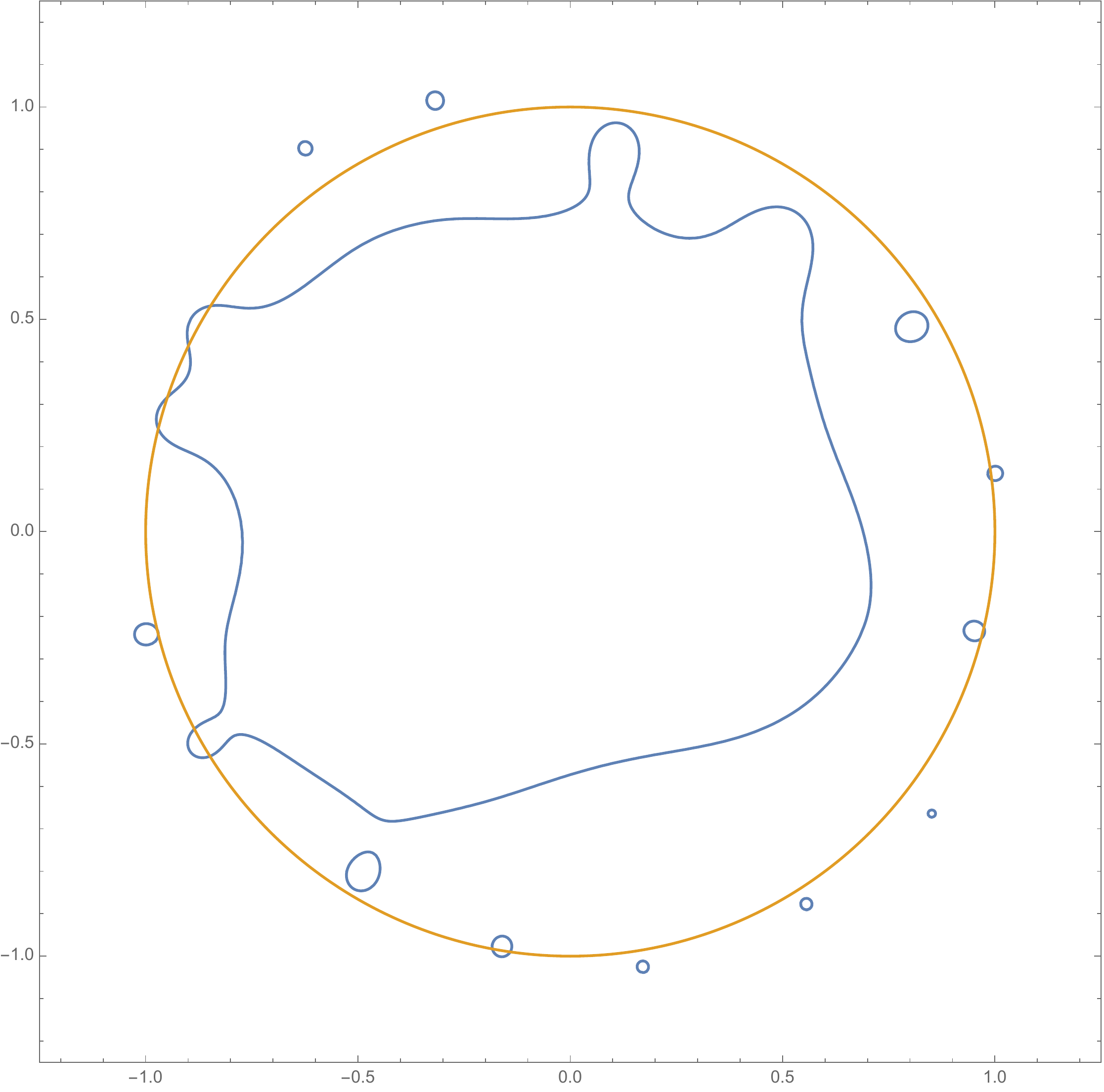}
\includegraphics[scale=0.17]{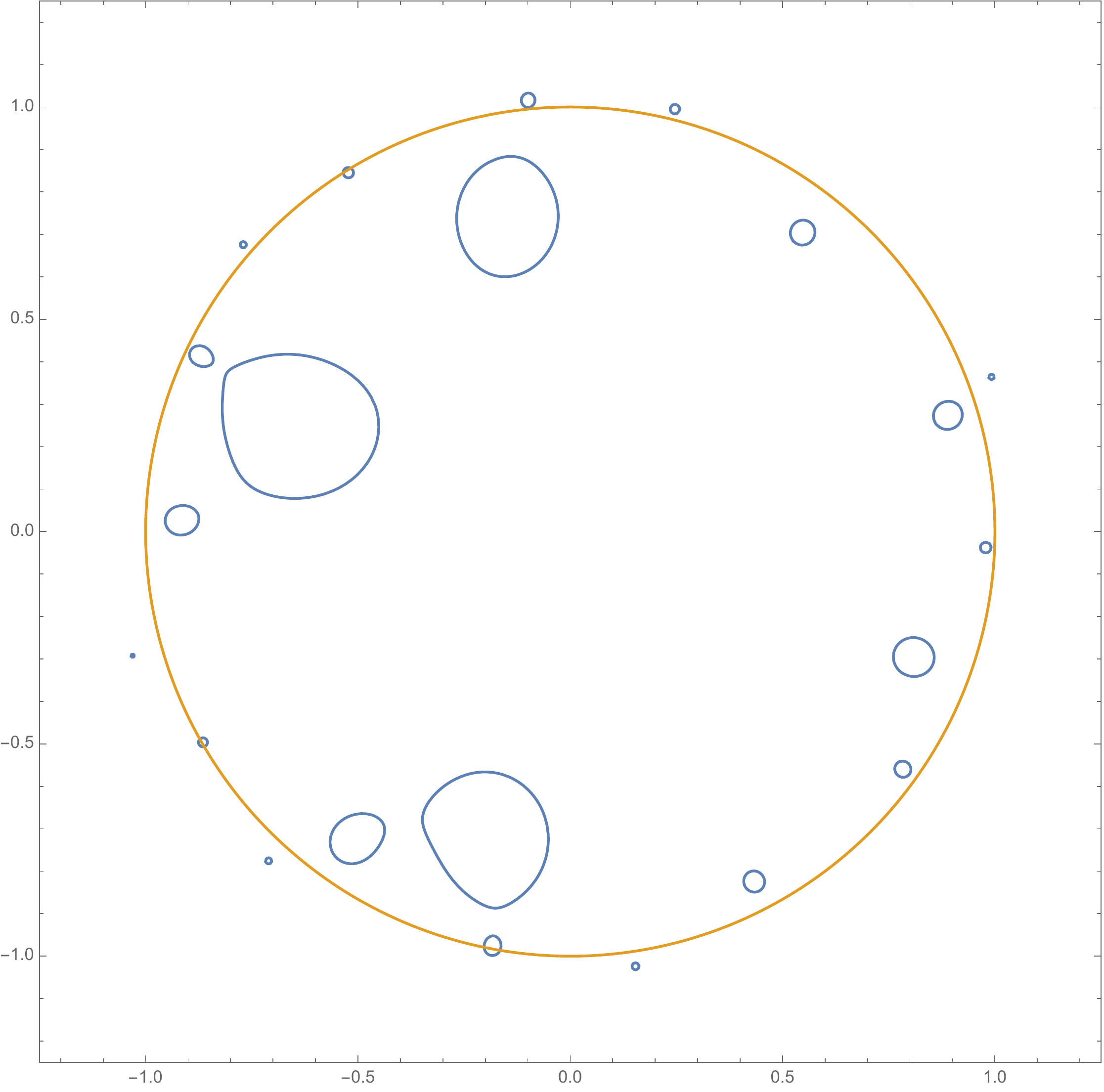}
\includegraphics[scale=0.17]{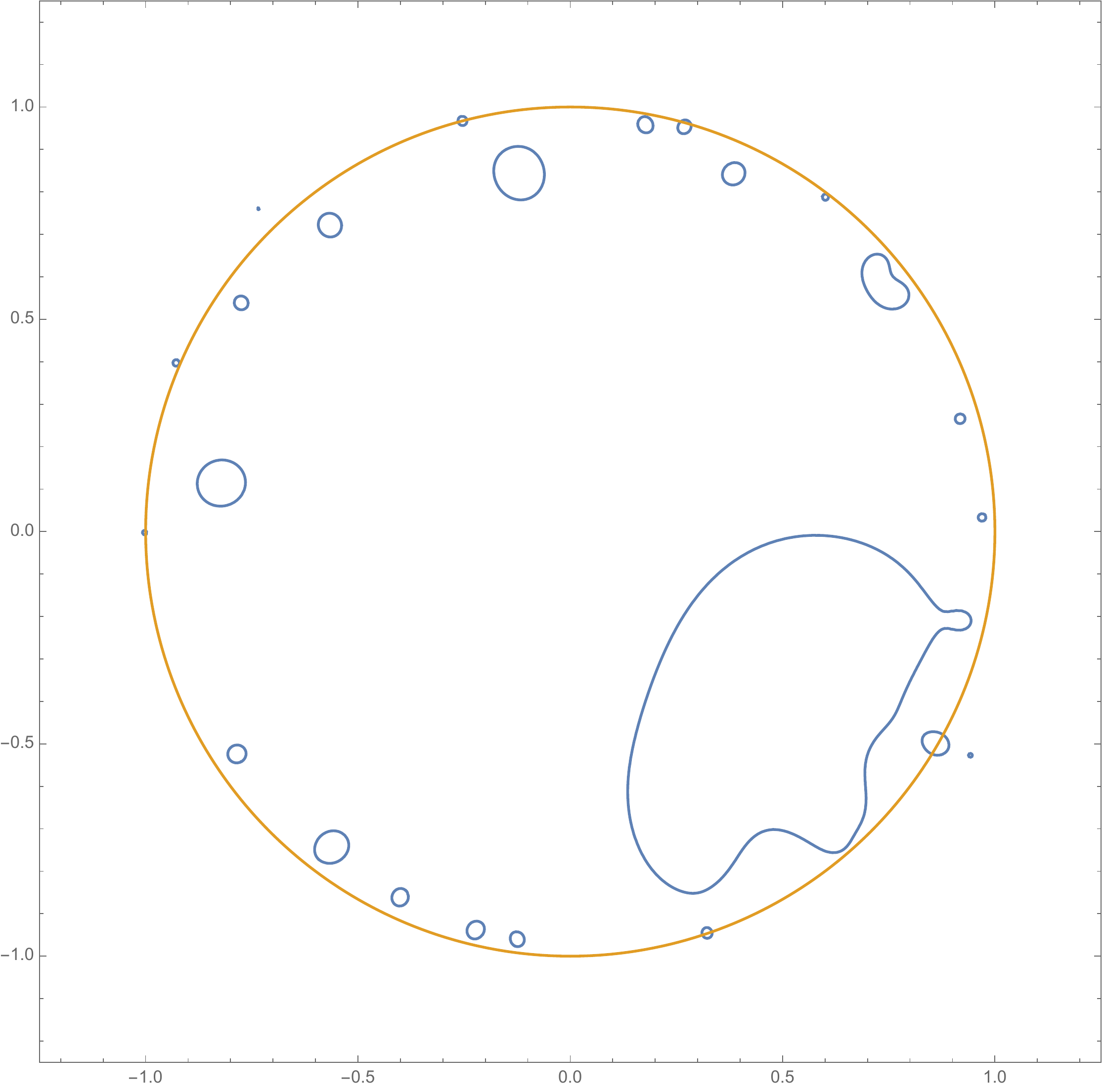}
\caption{Lemniscates of degree $n=20,30,40$  with i.i.d. Gaussian coefficients plotted together with a unit circle for reference.}
\label{fig:Kac}
\end{figure}

\begin{figure}[h]
\centering
\includegraphics[scale=0.17]{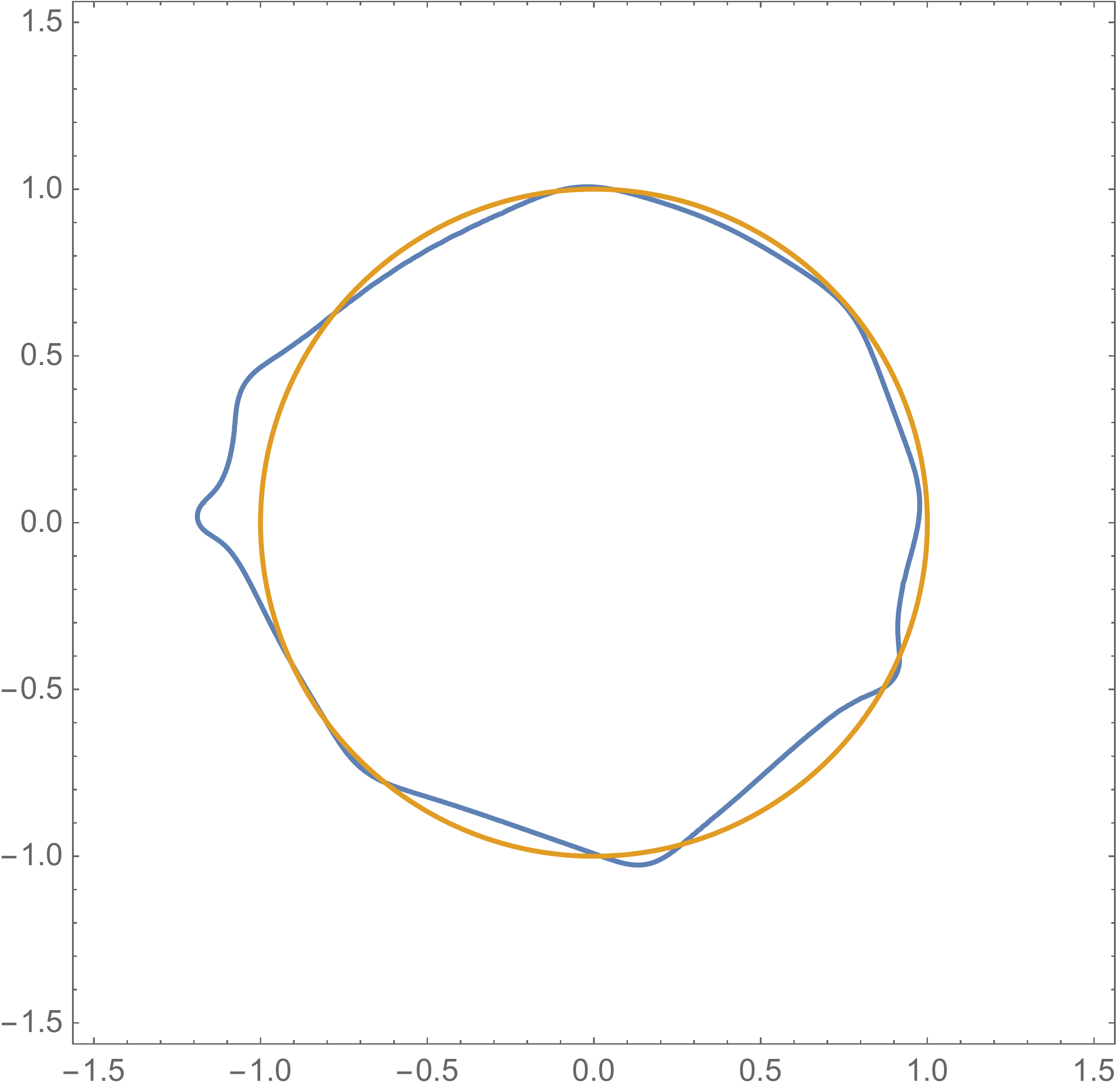}
\includegraphics[scale=0.17]{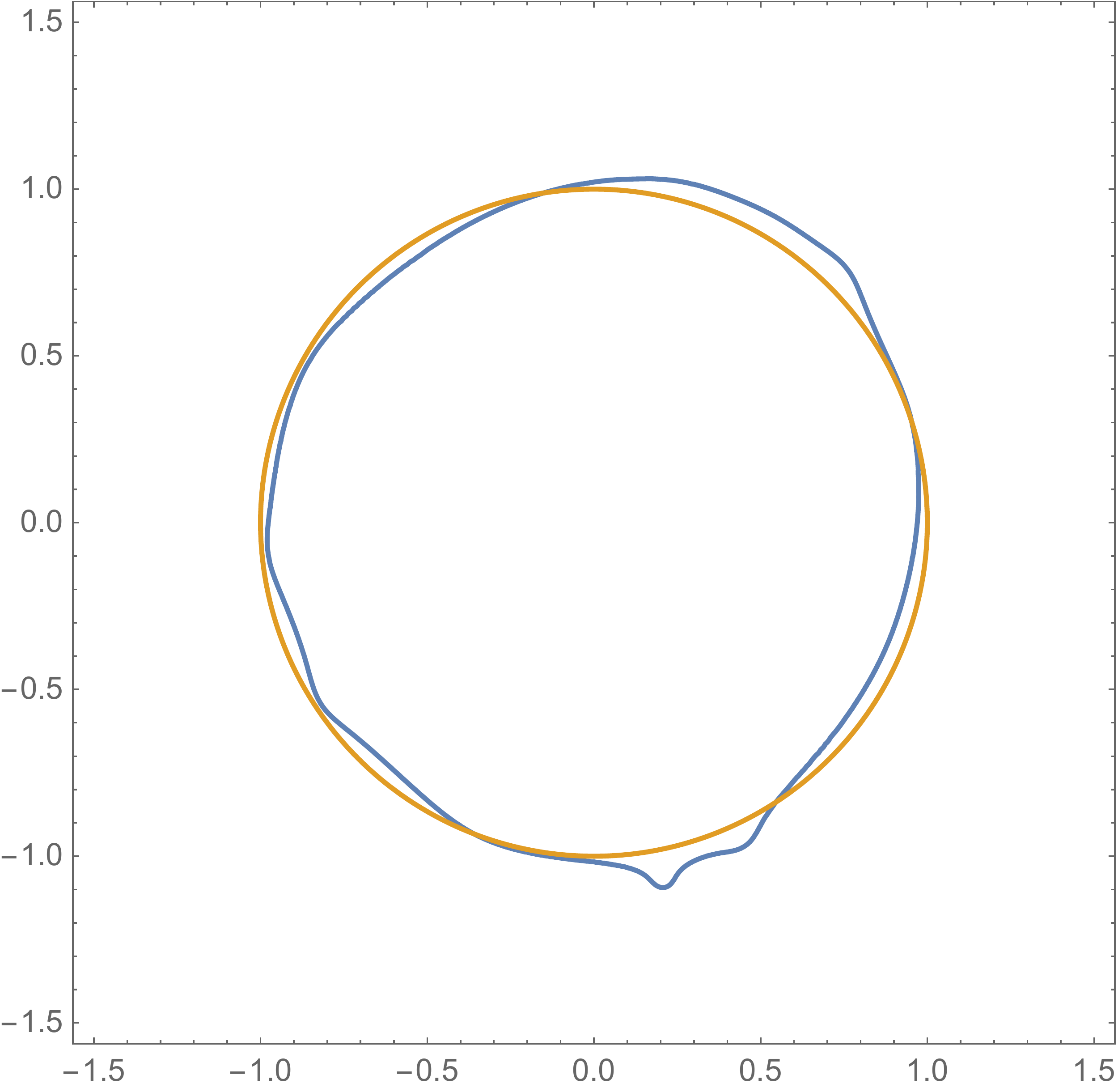}
\includegraphics[scale=0.17]{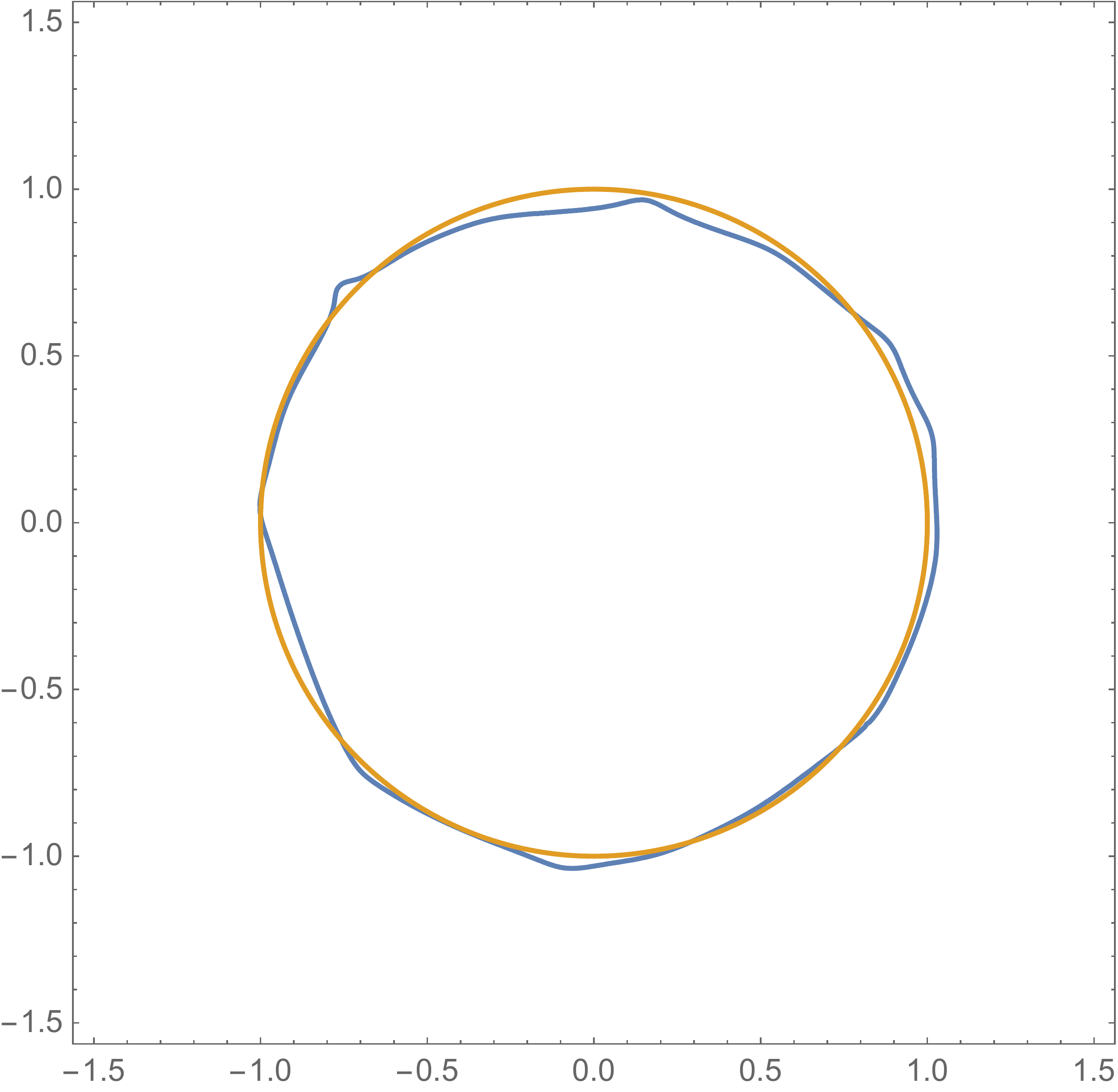}
\caption{Lemniscates of degree $n=20,30,40$ generated by the characteristic polynomial of a Ginibre matrix together with a unit circle plotted for reference.}
\label{fig:gin}
\end{figure}



\subsection*{Outline of the paper}

We review some preliminary results in Section \ref{sec:prelim} that serve as tools in the proofs the results stated above. 
We prove Theorem \ref{G} and Corollary \ref{cor:convergenceofinradius} in Section \ref{sec:main}, and we prove Theorem \ref{thm:pospot} and Corollary \ref{cor:smalllemniscate} in Section \ref{sec:pospot}. The proof of Theorem \ref{thm:unifcirc}, concerning uniform measure on the circle, is presented in Section \ref{sec:unifcirc}, and Theorem \ref{Gin}, related to the Ginibre ensemble, is proved in Section \ref{sec:Ginibre}.

\section{Preliminary Results}\label{sec:prelim}
\noindent We start with two preparatory lemmas which we use repeatedly in the proofs of our theorems. 

\begin{lemma}\label{varbd}
Let $\mu$ be a Borel probability measure with compact support $S\subset\C$ satisfying Assumption $(A).$ Whenever $K$ is a non-empty compact subset of $\Omega^{-}$ or a compact subset of $\Omega^{+}$ with $K\cap S=\emptyset,$ there exists a constant $c(K) >0$ such that
$$\inf_{z\in K}\int_{S}\left(\log |z-w|\right)^2d\mu(w)\geq c(K). $$
\end{lemma}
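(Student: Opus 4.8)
The plan is to argue by contradiction, exploiting the elementary fact that the nonnegative quantity $f(z):=\int_S(\log|z-w|)^2\,d\mu(w)$ can vanish only if $\mu$ is concentrated on the unit circle centred at $z$, in which case $U_{\mu}(z)=0$. This is incompatible with $z$ belonging to the set $\Omega^{-}$ (where $U_{\mu}<0$) or to $\Omega^{+}$ (where $U_{\mu}>0$). So once one knows the infimum of $f$ over the compact set $K$ is approached along a convergent sequence of points of $K$, it cannot equal $0$.

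Concretely, I would first note that $f$ is finite on $K$ — this is precisely Assumption~(\ref{A}) — and then suppose for contradiction that $\inf_{z\in K}f(z)=0$, choosing $z_j\in K$ with $f(z_j)\to 0$ and, by compactness of $K$, passing to a subsequence along which $z_j\to z_0\in K$. The key step is to pass to the limit inside the integral. For each fixed $w$, the function $z\mapsto(\log|z-w|)^2$ is continuous on $\C\setminus\{w\}$ and tends to $+\infty$ as $z\to w$, hence is lower semicontinuous; therefore $\liminf_j(\log|z_j-w|)^2\ge(\log|z_0-w|)^2$ for $w\ne z_0$, and equals $+\infty$ if $w=z_0$. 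Fatou's lemma then yields
$$0=\lim_{j\to\infty}\int_S(\log|z_j-w|)^2\,d\mu(w)\ \ge\ \int_S\liminf_{j\to\infty}(\log|z_j-w|)^2\,d\mu(w)\ \ge\ 0,$$
so this last integral is $0$; as its integrand is nonnegative it must vanish $\mu$-a.e., which forces $\mu(\{z_0\})=0$ and $|z_0-w|=1$ for $\mu$-a.e.\ $w\in S$. Consequently $U_{\mu}(z_0)=\int_S\log|z_0-w|\,d\mu(w)=0$, contradicting $z_0\in K\subseteq\Omega^{-}$ (resp.\ $z_0\in K\subseteq\Omega^{+}$). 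Hence $c(K):=\inf_{z\in K}f(z)>0$, which is the assertion.

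The argument is essentially routine, and I do not expect a serious obstacle; the only point needing care is the interchange of limit and integral, where dominated convergence is unavailable since $(\log|z_j-w|)^2$ need not be dominated uniformly in $j$ near $w=z_0$. Fatou's lemma is the right substitute, and the lower semicontinuity of $z\mapsto(\log|z-w|)^2$ is exactly what makes the resulting lower bound useful. I would also remark that the hypothesis $K\cap S=\emptyset$ in the $\Omega^{+}$ case plays no role in this particular argument.
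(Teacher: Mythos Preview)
Your proof is correct. The Fatou step is fine: the functions $w\mapsto(\log|z_j-w|)^2$ are nonnegative and measurable, and the lower semicontinuity of $z\mapsto(\log|z-w|)^2$ gives exactly the pointwise inequality $\liminf_j(\log|z_j-w|)^2\ge(\log|z_0-w|)^2$ needed to conclude. The contradiction with $z_0\in\Omega^{\pm}$ is immediate.

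The paper proceeds differently and more directly. Rather than argue by contradiction, it applies Jensen's inequality (since $\mu$ is a probability measure) to get
\[
\int_S(\log|z-w|)^2\,d\mu(w)\ \ge\ \Bigl(\int_S\log|z-w|\,d\mu(w)\Bigr)^2\ =\ U_{\mu}(z)^2,
\]
and then bounds $|U_{\mu}|$ away from zero on $K$: for $K\subset\Omega^{-}$ this uses upper semicontinuity of the subharmonic function $U_{\mu}$ (so it attains a negative maximum on $K$), and for $K\subset\Omega^{+}\setminus S$ it uses that $U_{\mu}$ is positive and harmonic, hence continuous, on a neighbourhood of $K$. This route has the advantage of producing the explicit lower bound $c(K)=\bigl(\inf_{K}|U_{\mu}|\bigr)^2$, whereas your compactness/Fatou argument is nonconstructive. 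On the other hand, your observation is right: your argument does not use the hypothesis $K\cap S=\emptyset$ in the $\Omega^{+}$ case, while the paper's does (to ensure continuity of $U_{\mu}$ on $K$ so that its infimum is attained and positive).
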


\vspace{0.1in}

\begin{proof}
Let $\mu$ satisfy assumption \eqref{A}, and let $K\neq\emptyset$ be compact. Then, by the Cauchy-Schwarz inequality we have for all $z\in K$

\begin{equation}\label{lbvar}
\int_{S}\left(\log |z-w|\right)^2d\mu(w)\geq \left(\int_{S}\left|\log |z-w|\right|d\mu(w) \right)^2\geq |U_{\mu}(z)|^2\\
\end{equation}

Thus in order to prove the lemma, it suffices to show that $|U_{\mu}(z)|^2$ is bounded away from zero for $z\in K$, whenever $K\subset\Omega^{-},$ or $K\subset\Omega^{+}$ and $K\cap S = \emptyset.$ 

\vspace{0.1in}

Suppose first that $K\subset\Omega^{-}$ is compact. Since subharmonic functions are upper semi-continuous and hence attain a maximum on any compact set, there exists $c_1(K)> 0$ such that $U_{\mu}(z)\leq\  -c_1(K),$ for all $z\in K.$ Hence $|U_{\mu}(z)|^2\geq c_1(K)^2,$ for $z\in K.$ In the other case, let $K\subset \Omega^{+}$ be compact and disjoint from the support $S$ of $\mu.$ Notice then that $U_{\mu}(z)$ is positive and harmonic on $K.$ An application of Harnack's inequality now gives the existence of the required constant (depending only on $K$). This concludes the proof of the lemma.
\end{proof}

The second lemma is based on a net argument which allows us to control the size of the modulus of a polynomial by its values at the points of the net. 
\begin{lemma}\label{net}
Let $G$ be a bounded Jordan domain with rectifiable boundary. Let $p(z)$ be a polynomial of degree $n.$ Then, there exists a  constant $C= C(G)>0,$ and points $w_1, w_2...w_{Cn^2}\in\partial G$ such that
\begin{equation}\label{ubnd}
\|p\|_{\partial G}\leq 2\max_{1\leq k\leq Cn^2}|p(w_k)|
\end{equation}
\end{lemma}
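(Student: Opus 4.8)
The plan is to reduce the estimate $\|p\|_{\partial G}\le 2\max_k |p(w_k)|$ to a Bernstein-type inequality for polynomials restricted to the rectifiable Jordan curve $\partial G$, and then cover $\partial G$ with a net of $O(n^2)$ points that is fine enough that $|p|$ cannot drop by more than a factor of $2$ between a net point and a nearby point of $\partial G$. First I would record the relevant gradient bound: since $G$ is a bounded Jordan domain, $p$ is a polynomial of degree $n$, and $\|p\|_{\partial G}=\|p\|_{\overline G}=:M$ by the maximum principle, a classical Bernstein/Markov inequality for the closed disk $\overline{B(0,R)}\supseteq \overline G$ (with $R=R(G)$ fixed) gives $\|p'\|_{\overline G}\le \frac{n}{R}\,\|p\|_{\overline{B(0,R)}}$. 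But $\|p\|_{\overline{B(0,R)}}$ is not controlled by $M$ in general, so instead I would apply the sharper fact that for $z$ in the \emph{interior} region one uses the maximum on $\overline G$ itself; the clean statement to invoke is that for any convex body, or more robustly via the Cauchy integral formula on a fixed annular neighborhood, $\|p'\|_{\overline G}\le C_1(G)\, n^2\, \|p\|_{\partial G}$. (The $n^2$ rather than $n$ is exactly why the net has $O(n^2)$ points; this is the Markov rather than Bernstein exponent, and it is harmless here.) Concretely: fix $\delta=\delta(G)>0$ so small that the $\delta$-neighborhood $G_\delta$ of $\overline G$ has closure inside $B(0,R)$; by the maximum principle and a Chebyshev/Bernstein–Walsh estimate, $\|p\|_{\overline{G_\delta}}\le e^{C n \delta}\|p\|_{\partial G}\le 2\|p\|_{\partial G}$ once $\delta\le c/n$, and then Cauchy's formula on circles of radius $\delta/2$ gives $\|p'\|_{\overline G}\le \frac{2}{\delta}\,\|p\|_{\overline{G_\delta}}\le \frac{C_1 n}{1}\,\|p\|_{\partial G}$ — wait, this only yields the $n$-power; to be safe and self-contained I would simply use the globally valid bound $\|p'\|_{\overline G}\le C_1(G)\,n^2\,\|p\|_{\partial G}$, which follows from Markov's inequality on an interval after composing with the rectifiable arc-length parametrization, or directly from the Bernstein–Walsh lemma with the Green's function of $\mathbb C\setminus\overline G$.

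Granting the gradient estimate $\|p'\|_{\overline G}\le C_1(G)\,n^2\,\|p\|_{\partial G}=:C_1 n^2 M$, the net construction is routine. Let $\ell=\ell(G)$ be the length of $\partial G$ and parametrize $\partial G$ by arc length $\gamma:[0,\ell]\to\partial G$. Choose $N=\lceil 2 C_1 \ell\, n^2\rceil$ equally spaced parameters $t_k=k\ell/N$, $k=1,\dots,N$, and set $w_k=\gamma(t_k)$; this gives $N\le C(G)\,n^2$ points with $C(G):=2C_1\ell+1$. For any $z\in\partial G$ there is a $k$ with arc-length distance, hence Euclidean distance, at most $\ell/N\le 1/(2C_1 n^2)$ from $w_k$. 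Since the straight segment from $w_k$ to $z$ need not lie in $\overline G$, I would instead integrate $|p'|$ along the boundary arc from $w_k$ to $z$, whose length is $\le \ell/N$; this arc lies on $\partial G\subset\overline G$, so
\[
|p(z)-p(w_k)|\le \int_{\text{arc}}|p'|\,|d\zeta|\le \frac{\ell}{N}\,\|p'\|_{\overline G}\le \frac{\ell}{N}\,C_1 n^2 M\le \tfrac12 M.
\]
Hence $|p(w_k)|\ge |p(z)|-\tfrac12 M$. Taking the supremum over $z\in\partial G$ gives $\max_k|p(w_k)|\ge M-\tfrac12 M=\tfrac12\|p\|_{\partial G}$, i.e. $\|p\|_{\partial G}\le 2\max_k|p(w_k)|$, which is \eqref{ubnd}.

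The main obstacle is making the gradient bound $\|p'\|_{\overline G}\lesssim_G n^2\|p\|_{\partial G}$ genuinely rigorous for an arbitrary bounded Jordan domain with only rectifiable (not smooth) boundary; the subtlety is that $p'$ must be controlled by the boundary sup-norm, and on nonconvex or rough domains one cannot connect interior points by segments inside $\overline G$. I would resolve this by the Bernstein–Walsh inequality: $|p(z)|\le M\exp\big(n\,g_{\mathbb C\setminus\overline G}(z,\infty)\big)$ for $z\notin\overline G$, where $g$ is the Green's function of the unbounded complementary component; since $g$ vanishes on $\partial G$ and is Lipschitz on a fixed neighborhood (for rectifiable boundary this neighborhood and Lipschitz constant depend only on $G$), on the $\delta$-neighborhood with $\delta=1/n$ we get $|p|\le e^{C_2}M$, and then Cauchy's estimate on disks of radius $1/n$ about each point of $\partial G$ yields $\|p'\|_{\partial G}\le C_2' n M$ — in fact the $n$-power, so $N=O(n)$ would even suffice, but since the lemma only claims $O(n^2)$ and that is what later applications need, I would state it with the $n^2$ for the cleanest possible proof, using the crude Markov-type bound and not optimizing the exponent. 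All constants depend only on $G$ (through $\ell$, $\diam G$, and the Green's function near $\partial G$), as required.
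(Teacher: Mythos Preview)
Your proposal is correct and follows essentially the same approach as the paper: invoke a Bernstein-type bound $|p'(z)|\le C_1(G)\,n^2\,\|p\|_{\partial G}$ on $\partial G$, place $N\asymp C_1\ell\,n^2$ equispaced points on $\partial G$ by arc length, and integrate $p'$ along the short boundary arc to conclude $|p(z)-p(w_k)|\le \tfrac12 M$. The only substantive difference is that the paper obtains the gradient bound by a direct citation of Pommerenke's Bernstein-type inequality for Jordan domains (\cite[Thm.~1]{PommDeriv}), whereas you sketch a self-contained derivation via Bernstein--Walsh and Green's function estimates; your observation that the latter actually yields the sharper exponent $n$ (so $O(n)$ net points would suffice) is correct but, as you note, unnecessary for the application.
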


\begin{proof}
The key to the proof is a Bernstein-type inequality (see \cite[Thm. 1]{PommDeriv})
\begin{equation}\label{eq:PommBern}
|p'(z)| \leq C_1 n^2 M,
\end{equation}
where $M := \| p \|_{\partial G}$, and
$C_1$ is a constant that depends only on $G$. With this estimate in hand, the proof reduces to the following argument that is well-known but which we nevertheless present in detail for the reader's convenience. Let $\ell = \ell(\partial G)$ denote the length of $\partial G.$ Let $N$ be a positive integer to be specified later. Divide $\partial G$ into $N$ pieces of equal length, with $w_0, w_1..., w_N$ denoting the points of subdivision. Let $z_0\in\partial G$ be such that $M= \|p\|_{\partial G}= |p(z_0)|.$ If $z_0$ is one of the $w_j,$ then the estimate \eqref{ubnd} clearly holds. If that is not the case, then $z_0$ lies  $|z_0-w_j|\leq \frac{\ell}{N}$, for some $j$ with $0\leq j\leq N$. We can now write 
\begin{equation}\label{Bernstein}
M - |p(w_j)|\leq |p(z_0) - p(w_j)|= \left|\int_{w_j}^{z_0}p'(t)dt \right|\leq C_1n^2M\frac{\ell}{N}.
\end{equation}
Here we have used the Bernstein-type inequality \eqref{eq:PommBern} to estimate the size of $|p'|$. If we now choose $N = 2\ell C_1n^2,$ then the estimate \eqref{Bernstein} becomes
$$M - |p(w_j)|\leq\dfrac{M}{2} $$
which concludes the proof of the lemma.
\end{proof}

We will also need the following concentration inequality (see Section~2.7 of \cite{BGM}).  This result, referred to as ``Bennett's inequality'', is similar to the well-known Hoeffding inequality, but note that, instead of being bounded, the random variables are merely assumed to be bounded from above.

\begin{thm}[Bennett's inequality]\label{Ben}
Let $X_1, X_2,..., X_n$ be independent random variables with finite variance such that $X_i\leq b$ for some $b>0$ almost surely for all $i\leq n.$ Let
$$S = \sum_{i=1}^{n} (X_i - \E(X_i))$$
and $\nu = \sum_{i=1}^{n}\E(X_i^2).$ Then for any $t>0,$
$$\mathbb{P}(S > t)\leq\exp\left(\dfrac{-\nu}{b^2}h\left(\dfrac{bt}{\nu}\right)\right),$$
where $h(u) = (1+u)\log(1+u) - u$ for $u> 0.$
\end{thm}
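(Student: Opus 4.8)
The plan is to establish Bennett's inequality by the classical Cram\'er--Chernoff exponential-moment method, optimized over the free parameter. By replacing each $X_i$ with $X_i/b$ — which rescales $S$, $t$ and $\nu$ in such a way that the asserted bound is unchanged — we may assume $b=1$.

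First I would fix $\lambda>0$, apply Markov's inequality to $e^{\lambda S}$, and use independence:
\[
\mathbb{P}(S>t)\le e^{-\lambda t}\,\mathbb{E}\big[e^{\lambda S}\big]=e^{-\lambda t}\prod_{i=1}^{n}\mathbb{E}\big[e^{\lambda(X_i-\mathbb{E} X_i)}\big].
\]
The crux is to bound a single factor. The key elementary fact is that $\phi(x):=(e^{\lambda x}-1-\lambda x)/x^2$, with $\phi(0):=\lambda^2/2$, is nondecreasing on $\mathbb{R}$; granting this, the hypothesis $X_i\le 1$ yields the pointwise inequality $e^{\lambda X_i}-1-\lambda X_i\le (e^{\lambda}-1-\lambda)X_i^2$, and taking expectations gives $\mathbb{E}[e^{\lambda X_i}]\le 1+\lambda\,\mathbb{E} X_i+(e^{\lambda}-1-\lambda)\,\mathbb{E}[X_i^2]$. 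Multiplying by $e^{-\lambda\mathbb{E} X_i}$ and applying $1+u\le e^u$ leaves $\mathbb{E}[e^{\lambda(X_i-\mathbb{E} X_i)}]\le \exp\!\big((e^{\lambda}-1-\lambda)\,\mathbb{E}[X_i^2]\big)$, so that
\[
\mathbb{P}(S>t)\le \exp\!\big(-\lambda t+(e^{\lambda}-1-\lambda)\,\nu\big).
\]
It remains to minimize the exponent over $\lambda>0$: the minimizer solves $e^{\lambda}-1=t/\nu$, i.e.\ $\lambda=\log(1+t/\nu)$, and substituting this value collapses the exponent to $\nu\big[t/\nu-(1+t/\nu)\log(1+t/\nu)\big]=-\nu\,h(t/\nu)$, which is the claimed bound when $b=1$; undoing the rescaling recovers the general form $\exp\!\big(-\tfrac{\nu}{b^2}h(\tfrac{bt}{\nu})\big)$.

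The one point that is not bookkeeping is the monotonicity of $\phi$, and this is the step I expect to be the main obstacle. I would prove it by writing $\phi'(x)=g(x)/x^3$ with $g(x)=xf'(x)-2f(x)$ and $f(x)=e^{\lambda x}-1-\lambda x$; then $g(0)=0$ and $g'(x)=\lambda\big(e^{\lambda x}(\lambda x-1)+1\big)$, and since $y\mapsto e^y(y-1)+1$ vanishes and attains its minimum at $y=0$ (one more differentiation), $g'\ge 0$, so $g$ has the same sign as $x$ and hence $\phi'\ge 0$ throughout $\mathbb{R}$. This is the only place the \emph{one-sided} bound $X_i\le b$ (rather than two-sided boundedness) enters, and it is precisely this feature that makes Bennett's inequality the right tool for the present paper, where the summands $\log|z-X_k|$ are bounded above but not below. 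As an alternative to citing \cite{BGM}, one could note that the power series $\phi(x)=\sum_{j\ge 0}\lambda^{j+2}x^j/(j+2)!$ handles $x\ge 0$ at once and reduce the $x<0$ case to the sign analysis above.
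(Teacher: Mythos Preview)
The paper does not give its own proof of this statement: Theorem~\ref{Ben} is quoted as a known concentration inequality with a pointer to Section~2.7 of \cite{BGM}, and is then used as a black box in the proofs of Theorem~\ref{G} and elsewhere. Your argument is the standard Cram\'er--Chernoff derivation of Bennett's inequality and is correct; the rescaling to $b=1$, the pointwise bound via monotonicity of $\phi(x)=(e^{\lambda x}-1-\lambda x)/x^2$, the passage to $\exp\big((e^{\lambda}-1-\lambda)\mathbb{E}[X_i^2]\big)$ via $1+u\le e^u$, and the optimization in $\lambda$ all check out. Your remark that the one-sided bound $X_i\le b$ is exactly what is needed for the summands $\log|z-X_k|$ is also on point and matches how the paper deploys the inequality.
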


\section{Proofs of Theorem \ref{G} and Corollary~\ref{cor:convergenceofinradius}}\label{sec:main}
\begin{proof}[Proof of Theorem \ref{G}]
We divide the proof into two steps.

\vspace{0.1in}

\noindent \textbf{Step $1$: Compact subsets of $\Omega^{-}$ lie in $\Lambda_n$}

\vspace{0.1in}

\noindent By our hypothesis $\Omega^{-}\neq\emptyset.$ Let $K\subset\Omega^{-}$ be compact. We wish to show that $K\subset\Lambda_n$ w.o.p. We may assume without loss of generality that $K=\overline{G}$ for some bounded Jordan domain $G$ with rectifiable boundary, since any connected compact is contained such a domain.
Recall that $\Lambda_n = \{ z: \log |p_n(z)| < 0 \}$. Writing $$\log|p_n(z)| = \sum_{k=1}^n \log|z-X_k|$$
as a sum of i.i.d. random variables, for $z\in\Omega^{-}$ we will use a concentration inequality to show that $\log |p_n(z)|$ is negative with overwhelming probability. We then use lemma \ref{net} to get a uniform estimate on $K$ and finish the proof. 

Fix $z_0\in K.$ For $k=1,2,...,n,$ define $Y_k = \log|z_0-X_k|$, and let
$$Z := \sum_{k=1}^n ( Y_k - \E Y_k).$$
Notice since $z_0\in\Omega^{-}$
$$\E Y_k = U_{\mu}(z_0) < 0$$
and by the assumption in the statement of the theorem, we also have 
\begin{align*}
    \sigma_{z_0}^2:=\E Y_k^2 &= \int_{S} (\log|z_0 - u|)^2 d\mu(u) < \infty \\
\end{align*}
\vspace{0.1in}

\noindent Now applying Theorem \ref{Ben} to our problem with $b\geq\sup_{z\in K, w\in S}\log(|z|+ |w|) $, $\nu= n\sigma_{z_0}^2,$ we obtain
\begin{align*}\label{eqp}
 \mathbb{P}\{\log|p_n(z_0)| > -\log(2)\}&= \mathbb{P}\{\log|p_n(z_0)|-nU_{\mu}(z_0) > -nU_{\mu}(z_0) -\log(2)\}\\
 &=\mathbb{P}\{Z > -nU_{\mu}(z_0) -\log(2)\}\\
 &\leq \exp\left(-\dfrac{n\sigma_{z_0}^2}{b^2}h\left(\dfrac{-b}{\sigma_{z_0}^2}U_{\mu}(z_0) - \dfrac{b\log(2)}{n\sigma_{z_0}^2} \right) \right).
\end{align*}
Since subharmonic functions are upper semi-continuous and hence attain a maximum on any compact set, we have, $U_{\mu}(z)\leq - M$ for all $z\in K$ and some $M> 0.$ Also, by Lemma~\ref{varbd}, $0< c_1(K)\leq\sigma_{z}^2\leq c_2(K) <\infty,$ for all $z\in K.$ This bound together with the fact that $h$ is an increasing function can now be used in the above estimate to get

\begin{equation}\label{eq4}
\mathbb{P}\{\log|p_n(z_0)| > -\log(2)\}\leq\exp\left(-\dfrac{n\sigma_{z_0}^2}{b}h\left(\dfrac{-b}{\sigma_{z_0}^2}U_{\mu}(z_0) - \dfrac{b\log(2)}{n\sigma_{z_0}^2} \right) \right)\leq \exp\left(-cn \right)   
\end{equation}

\noindent for some constant $c= c(K) >0$ depending only on $K.$ Using lemma \ref{net} in combination with a union bound and the estimate \eqref{eq4}, we obtain
\begin{align*}
\PP\{\log\|p_n\|_K < 0 \}&\geq \PP\{\max_{1\leq k\leq Cn^2}{\log|p_n(w_{k,n})|} + \log(2)< 0 \} \\
&= 1 - \PP\{\max_{1\leq k\leq Cn^2}{\log|p_n(w_{k,n})|}> - \log(2)\} \\
&= 1- \PP\left(\bigcup_{k=1}^{Cn^2}\{\log|p_n(w_{k,n})|> - \log(2)\}\right) \\
&\geq 1-Cn^2\exp\left(-cn \right)
\end{align*}
where in the last inequality we used \eqref{eq4}. This proves that $K\subset\Lambda_n$ w.o.p. and concludes the proof of the first part. 

\vspace{0.1in}

\noindent \textbf{Step 2: Compact subsets $L$ of $\Omega^{+}\setminus S$ are in $ \Lambda_n^c$}.

\vspace{0.1in}

\noindent Without loss of generality, we may assume that $L$ is a closed disc in $\Omega^{+}\setminus S$. Since $S$ is a compact set disjoint from $L$, there exists $\delta >0$ such that the distance $d(L, S) = \delta.$  Notice that for all $z\in L,$ we have $-\log|z - X_i|\leq -\log\delta$.  Now fix $z_0\in L.$ An application of Bennett's inequality to the random variables $- \log|z_0 - X_i|$ yields,
\begin{equation}\label{b2}
\PP\left(-\log|p_n(z_0)| + nU_{\mu}(z_0)\geq nU_{\mu}(z_0) -1 \right)\leq \exp\left(-\dfrac{n\sigma_{z_0}^2}{b}h\left(\dfrac{b}{\sigma_{z_0}^2}U_{\mu}(z_0)-\dfrac{b}{n\sigma_{z_0}^2}) \right) \right).
\end{equation}
The quantities $b, h$ have an analogous meaning as in Step $1$. By Lemma \ref{varbd}, $\sigma_z^2$ is bounded below, and by assumption it is also bounded above, by some positive constants depending only on $L$. Furthermore, Lemma \ref{varbd} shows that $U_{\mu}(z)\geq c(L) >0$ for all $z\in L.$ Making use of all this in \eqref{b2}, we can now estimate
\begin{align}
\PP\left(\log|p_n(z_0)| > 1\right)&=\PP\left(\log|p_n(z_0)| - nU_{\mu}(z_0)> -nU_{\mu}(z_0)+1 \right) \nonumber \\
&= 1 - \PP\left(\log|p_n(z_0)| - nU_{\mu}(z_0)\leq -nU_{\mu}(z_0)+1 \right) \nonumber  \\
&= 1 - \PP\left(-\log|p_n(z_0)| + nU_{\mu}(z_0)\geq nU_{\mu}(z_0) -1\right) \nonumber  \\
&\geq 1 - \exp\left(-\dfrac{n\sigma_{z_0}^2}{b}h\left(\dfrac{bU_{\mu}(z_0)}{\sigma_{z_0}^2 }-\dfrac{b}{n\sigma_{z_0}^2} \right)\right) \nonumber  \\
&\geq 1 - \exp\left(-C_0(L)n \right). \label{Ben2}
\end{align}
This estimate shows that individual points of $L$ are in $\Lambda_n ^c$ with overwhelming probability. To finish the proof, we once again use a net argument to show that $L\subset\Lambda_n^c$ w.o.p. We first observe that if $z, w\in L$, and $X$ is one of the $X_k$'s, the mean value theorem gives
$$|\log|z - X| - \log|w - X|| \leq \dfrac{|z - w|}{\delta},$$   
\noindent where we have used that $d(L, S) =\delta >0$ (and that $L$ is a disk). The triangle inequality then yields \be\label{eq:triangle}
|\log|p_n(z)| - \log|p_n(w)| | \leq \dfrac{n|z-w|}{\delta}, \quad \text{for } z,w \in L.
\ee
Choose a net of $n^2$ equally spaced points $w_1, w_2,... w_{n^2}$ on $\partial L$, and note that any point on $\partial L$ is within $C_1/n^2$ of some point in the net, where $C_1$ is a constant depending on the radius of $L$.  From \eqref{eq:triangle} we have that
\begin{equation}\label{net2}
|\log|p_n(z)| - \log|p_n(w)| |\leq \frac{C_2}{n}, \quad \text{for } z,w \in L \text{ with } |z-w|\leq\frac{C_1}{n^2}, \end{equation}
where $C_2 = C_1/\delta$ is a constant.

\noindent   We are now ready to show that for large $n$, $\inf_{z\in L}\log|p_n(z)| > 0$ w.o.p. Indeed, note that the point on $\partial L$ where the infimum of $\log|p_n|$ is attained must be within $C_1/n^2$ of some point in the net $\{w_1, w_2..., w_{n^2}\}$. Then by \eqref{net2},
$$\PP\left(\inf_{L}\log|p_n(z)| > 0\right)\geq \PP\left(\bigcap_{k=1}^{n^2}\{\log|p_n(w_k)|> 1\}\right).$$

\noindent Therefore, we obtain
\begin{align*}
\PP\left(\inf_{L}\log|p_n(z)| > 0\right)&\geq \PP\left(\bigcap_{k=1}^{n^2}\{\log|p_n(w_k)|> 1\}\right)\\
&= 1 - \sum_{k=1}^{n^2}\PP\left(\log|p_n(w_k)|\leq 1 \right)\\
&\geq 1 - n^2\exp(-C_0 \, n).
\end{align*}
by the pointwise estimate \eqref{Ben2}. This concludes the proof of the theorem.
\end{proof}

\begin{proof}[Proof of Corollary~\ref{cor:convergenceofinradius}]
We assume that the measure $\mu$ is as in Theorem~\ref{G}.  Let $\rho_n=\rho(\Lambda_n)$ be the inradius of the lemniscate of $p_n$ and let $\rho=\rho(\Omega^{-})$ be the inradius of $\Omega^{-}$. By Theorem~\ref{G}, we immediately get $\liminf \rho_n\ge \rho$. 

Let $S$ be the support of $\mu$. As $S\cap \Omega^{+}=\emptyset$,  Theorem~\ref{thm:pospot} shows that if $m>0$ then $\Lambda_n\cap \{U_{\mu}\ge m\}$ is contained in a union of at most $n$ circles each of radius $e^{-cn}$. Writing $\rho_n(m)$ for $\rho(\Lambda_n\cap \{U_{\mu}<m\})$ and $\rho(m)$ for $\rho(\{U_{\mu}<m\})$, it is then clear that  $\rho_n \le \rho_n(m)+2ne^{-cn}\le \rho(m)+2ne^{-cn}$ and therefore, first letting $n\to \infty$ and then letting $m\downarrow 0$ we see that  $$\limsup_{n\to \infty} \rho_n\le \lim_{m\downarrow 0}\rho(m).$$

As $U_{\mu}$ is continuous on $\C \setminus S$, it follows that for any $\epsilon>0$ there is $m>0$ such that $\{U_{\mu}<m\}\subseteq \Omega_{\epsilon}^{-}$, the $\epsilon$ enlargement of $\Omega^{-}$. Hence, with $\rho'(\epsilon):=\rho(\Omega_{\epsilon}^{-})$, we have
$$\limsup_{n\to \infty} \rho_n\le \lim_{\epsilon\downarrow 0}\rho'(\epsilon).$$
Under the additional assumption that $S\subseteq \Omega^{-}$, we show that $\rho'(\epsilon)\downarrow \rho$ as $\epsilon \downarrow 0$ and that completes the proof that $\limsup \rho_n\le \rho$. That $\rho'(\epsilon)\downarrow \rho$ requires a proof as  inradius is not continuous under decreasing limits of sets. For example, the inradius of the slit disk $\D\setminus [0,1)$ is $1/2$ but any $\epsilon$-enlargement of it has inradius $1$.

As $U_{\mu}$ is harmonic on $\C\setminus S$ and $S\subseteq \Omega^{-}$ and $U_{\mu}(z)\sim \log |z|$ near $\infty$,  the level set $\{U_{\mu}=0\}$ is a compact set comprised of curves that are real analytic except for a discrete set of points (the critical points of $U_{\mu}$ are  zeros of  locally defined analytic functions). It also separates $S$ from $\infty$. 
Thus, $\{U_{\mu}<0\}$ can be written as a union of Jordan domains, and there are at most finitely many components that have inradius more than any given number. 

Pick a component $V$ of $\Omega^{-}$ that attains the inradius $\rho$. The boundary of $V$ can have a finite number of critical points of $U_{\mu}$. Locally around any such critical point, $U_{\mu}$ is the real part of a holomorphic function that looks like $cz^p$ for some $p$, and hence $U_{\mu}=0$ is like a system of equi-angular lines with angle $\pi/p$ between successive rays. In particular, there are no cusps. What this shows is that $V$ satisfies the following ``external ball condition": There is a $\delta_0>0$ and $B<\infty$, such that for any $\delta<\delta_0$ and each $w\in \partial V$, there is a 
\begin{equation}\label{eq:exteriorballcondition}
w'\in \C\setminus V \; \mbox{ such that }|w'-w|=\delta \mbox{ and }|w'-z|\ge \delta/B \mbox{ for all }z\in \Omega^{-}. 
\end{equation}
Now suppose $\D(z,r)\subseteq \Omega_{\epsilon}^{-}$. If $\epsilon<\delta_0/B$, we claim that $\D(z,r-2B\epsilon)\subseteq \Omega^{-}$, which of course proves that $\rho \ge \rho'(\epsilon)-B\epsilon$, completing the proof. If the claim was not true, then we could find $w\in \partial V$ such that $|w-z|\le r-2B\epsilon$. Find $w'$ as in \eqref{eq:exteriorballcondition} with $\delta=B\epsilon$. Then $w'\not\in \Omega_{\delta/B}=\Omega_{\epsilon}$ but 
$$
|w'-z|\le |w'-w|+|w-z|\le \frac{\delta}{B}+r-2B\epsilon <r.
$$
This is a contradiction as $w'\in \D(z,r)\subseteq \Omega_{\epsilon}$.
\end{proof}

\section{Proof of Theorem \ref{thm:pospot} and Corollary~\ref{cor:smalllemniscate}}\label{sec:pospot}
A standard net argument can be used to prove the theorem. But we would like to first present a proof of Corollary~\ref{cor:smalllemniscate} by a different method, which may be of independent interest. At the end of the section, we outline  the net argument  to prove Theorem~\ref{thm:pospot}. 

We will need the following lemma in the proof of Corollary~\ref{cor:smalllemniscate}.
\begin{lemma}\label{lemma:logP'}
Under the  assumptions of Corollary~\ref{cor:smalllemniscate}, there exists $c_1>0$ such that
\be
\PP \left( \log|p_n'(X_1)| \leq \frac{m}{2}(n-1) \right) \leq e^{-c_1 n}.
\ee
\end{lemma}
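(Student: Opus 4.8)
\textbf{Proof plan for Lemma~\ref{lemma:logP'}.}

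The plan is to write $\log|p_n'(X_1)| = \sum_{k=2}^n \log|X_1 - X_k|$ and note that, \emph{conditionally} on $X_1 = z$, this is a sum of $n-1$ i.i.d. random variables $\log|z - X_k|$, $k \geq 2$, each with mean $U_\mu(z)$. Since $X_1$ is distributed according to $\mu$ and $\supp\mu = S \subseteq \{U_\mu \geq m\}$ (this is exactly the hypothesis of Corollary~\ref{cor:smalllemniscate}), we have $U_\mu(X_1) \geq m$ almost surely. So the conditional mean of $\log|p_n'(X_1)|$ is at least $m(n-1)$, and we want to show it is unlikely to fall below $\frac{m}{2}(n-1)$ — a deviation of at least $\frac{m}{2}(n-1)$ below the mean. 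This is a lower-tail estimate, so I would apply Bennett's inequality (Theorem~\ref{Ben}) to the variables $-\log|z - X_k|$, which are bounded above by $-\log|z-X_k| \leq -\log(\dist(z, \cdot))$... but here is the subtlety: $z = X_1$ may be arbitrarily close to some $X_k$, so the individual summands $-\log|z-X_k|$ are \emph{not} uniformly bounded above. This is where assumption~(D) enters.

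The key step is therefore to control the variance (second moment) uniformly and to handle the upper bound $b$ in Bennett's inequality. Under assumption~(D), $\mu(B(z,r)) \leq C r^\epsilon$ for all $z$ and all $r > 0$, which gives, by the layer-cake / tail integration formula, a uniform bound $\sup_z \int_S (\log|z-w|)^2 \, d\mu(w) \leq C' < \infty$ (the contribution of small $|z-w|$ is integrable because the measure of $\{|z-w| < r\}$ decays polynomially in $r$ while $(\log(1/r))^2$ grows only logarithmically). So $\nu = (n-1)\sigma_z^2 \leq (n-1)C'$ uniformly in $z$. For the upper bound $b$: the summands $Y_k := -\log|z - X_k|$ are bounded above by $-\log 0 = +\infty$ in the worst case, so Bennett's inequality as literally stated does not apply. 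The fix is a truncation argument: split $Y_k = Y_k \mathbf{1}_{|z-X_k| \geq e^{-An}} + Y_k \mathbf{1}_{|z-X_k| < e^{-An}}$ for a suitable constant $A$. On the event that \emph{no} $X_k$ ($k \geq 2$) lands within $e^{-An}$ of $X_1$ — which holds w.o.p. by assumption~(D) and a union bound, since $\PP(\exists k : |X_1 - X_k| < e^{-An}) \leq (n-1) \sup_z \mu(B(z, e^{-An})) \leq (n-1) C e^{-A\epsilon n}$ — the summands are all bounded above by $An$, so we may apply Bennett with $b = An$.

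With $b = An$, $\nu \leq (n-1)C'$, $t = \frac{m}{2}(n-1)$ (we want $\PP(\sum (Y_k - \E Y_k) \geq t')$ where $t'$ corresponds to $-\log|p_n'(z)| \leq -\frac{m}{2}(n-1)$, i.e. the sum $\sum Y_k \geq -\frac{m}{2}(n-1)$ while $\E \sum Y_k = -(n-1)U_\mu(z) \leq -m(n-1)$, so the centered sum exceeds $\frac{m}{2}(n-1)$), Bennett's bound is $\exp\big(-\frac{\nu}{b^2} h(\frac{bt}{\nu})\big)$ with $\frac{bt}{\nu} \asymp \frac{An \cdot \frac{m}{2}(n-1)}{(n-1)C'} \asymp \frac{Amn}{2C'}$, so the argument of $h$ grows linearly in $n$, $h$ of a linear quantity is $\gtrsim n\log n$, and $\frac{\nu}{b^2} h(\cdots) \gtrsim \frac{(n-1)C'}{A^2 n^2} \cdot n \log n \gtrsim \frac{\log n}{A}$ — wait, that decays. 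Let me reconsider: one should take $b$ to be a \emph{constant} not growing with $n$, by truncating at a fixed scale $e^{-A}$ (not $e^{-An}$) — but then the excluded event is not w.o.p. The cleanest route, used repeatedly in this paper (see the proof of Theorem~\ref{G}), is actually that $b$ may be taken \emph{bounded above} uniformly since we only need $z$ in a neighborhood of $S$ and $X_k \in S$ compact, EXCEPT for the singularity at $z = X_k$; so truncate at scale $r_0 = e^{-c_0 n}$ matching the statement, exclude the w.o.p. bad event, and on the good event $b \leq c_0 n + O(1)$. Then $\frac{\nu}{b^2} h\big(\frac{bt}{\nu}\big)$: since $h(u) \geq \frac{u}{2}\log u$ for large $u$ and $\frac{bt}{\nu} \asymp n$, we get $\frac{\nu}{b^2} \cdot \frac{bt}{2\nu}\log(bt/\nu) = \frac{t}{2b}\log(bt/\nu) \asymp \frac{m(n-1)/2}{2 c_0 n}\cdot \log(cn) \asymp \log n$, still only logarithmic. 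The resolution is that Bennett with a sub-Gaussian-type tail is too weak here and instead one should note that we do \emph{not} need the full strength; rather, observe $h(u) \geq c u^2$ for bounded $u$ is the Gaussian regime, and we are genuinely in a regime where the answer is $e^{-cn}$ only if $b$ is $O(1)$. So the honest plan: truncate at a \emph{fixed} radius $r_0 > 0$ small enough that (by assumption (A) or (D)) the truncated variables $\tilde Y_k = \max(Y_k, \log(1/r_0))$... no.

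Let me state the plan more carefully without the dead ends. \textbf{Revised plan:} Condition on $X_1 = z$ with $U_\mu(z) \geq m$. Apply Bennett's inequality (Theorem~\ref{Ben}) to $Y_k = -\log|z - X_k|$, $k = 2, \dots, n$, which satisfy $Y_k \leq b := \log(1/\delta_0)$ where $\delta_0 > 0$ is a deterministic lower bound that holds \emph{after} removing a w.o.p. bad event: precisely, let $E_n = \{\min_{k \geq 2}|X_1 - X_k| \geq e^{-c_0 n}\}$; by assumption~(D) and a union bound, $\PP(E_n^c) \leq Cn e^{-c_0 \epsilon n}$. Hmm, on $E_n$ the bound is $b = c_0 n$, not $O(1)$, and we saw that only gives $e^{-c\log n}$.

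The correct fix, which I now commit to: one does \emph{not} truncate at the tiny scale. Instead, split $\log|p_n'(X_1)| = \Sigma_{\text{near}} + \Sigma_{\text{far}}$ where $\Sigma_{\text{near}}$ sums over $k$ with $|X_1 - X_k| < 1$ and $\Sigma_{\text{far}}$ over $|X_1 - X_k| \geq 1$. The far sum has bounded-above summands (indeed bounded, since $S$ compact) and mean controllable, so Bennett applies and gives an $e^{-cn}$ lower tail for $\Sigma_{\text{far}}$ around its mean. For $\Sigma_{\text{near}}$: the summands $-\log|X_1 - X_k|$ are \emph{positive} (since $|X_1 - X_k| < 1$), so $\Sigma_{\text{near}} \geq 0$ deterministically, and we simply \emph{discard} it — we need a lower bound on $\log|p_n'(X_1)|$, and throwing away a nonnegative part is fine! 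So $\log|p_n'(X_1)| \geq \Sigma_{\text{far}}$, and it remains to show $\Sigma_{\text{far}} \geq \frac{m}{2}(n-1)$ w.o.p. Now $\E[\Sigma_{\text{far}} \mid X_1 = z] = (n-1)\int_{|z-w|\geq 1}\log|z-w|\,d\mu(w)$, and the missing piece $\int_{|z-w|<1}\log|z-w|\,d\mu(w) = U_\mu(z) - \E[\Sigma_{\text{far}}\mid z]/(n-1)$ is $\leq 0$, so actually $\E[\Sigma_{\text{far}}\mid z]/(n-1) \geq U_\mu(z) \geq m$ — even better, the far part alone has mean $\geq m(n-1)$. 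Bennett applied to the bounded variables $\log|z - X_k|\mathbf{1}_{|z-X_k|\geq 1}$ (bounded above by $\log(\diam\,\text{of relevant set})$, bounded below issues don't affect Bennett which only needs upper bound) then gives $\PP(\Sigma_{\text{far}} < \frac{m}{2}(n-1) \mid X_1 = z) \leq e^{-c_1 n}$ uniformly in $z$ with $U_\mu(z) \geq m$, where $c_1$ depends on $m$, on $b = \sup_{z,w}|\log|z-w||\mathbf{1}_{|z-w|\geq 1}$, and on the uniform variance bound $\sigma_z^2 \leq C(K)$ from assumption~(A) (or (D)), all finite and uniform. Taking expectation over $X_1 \sim \mu$ (using $U_\mu(X_1) \geq m$ a.s. since $\supp\mu = S \subseteq \{U_\mu \geq m\}$) gives the claim with $c_1$ as produced by Bennett. \textbf{The main obstacle} is exactly this: recognizing that the singular ``near'' terms can be dropped because they have the favorable sign, so that no delicate truncation at scale $e^{-c_0 n}$ is needed and Bennett is applied only to genuinely bounded summands.
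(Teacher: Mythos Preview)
Your final committed plan has a fatal sign error. You define $\Sigma_{\text{near}} = \sum_{k:\,|X_1-X_k|<1}\log|X_1-X_k|$; since each summand $\log|X_1-X_k|$ is \emph{negative} on that index set, you have $\Sigma_{\text{near}}\le 0$, not $\ge 0$, and hence
\[
\log|p_n'(X_1)|=\Sigma_{\text{near}}+\Sigma_{\text{far}}\ \le\ \Sigma_{\text{far}},
\]
which is the wrong direction for a lower bound. The near terms are precisely the dangerous ones for this lemma: they are what can make $\log|p_n'(X_1)|$ very negative (when some $X_k$ nearly collides with $X_1$), and they cannot be discarded. Your subsequent claim that $\E[\Sigma_{\text{far}}\mid z]/(n-1)\ge U_\mu(z)$ has the same sign reversed.

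Your earlier diagnosis was actually correct: Bennett's inequality with $b\asymp n$ only yields a polynomially decaying bound, and no fixed truncation scale gives $b=O(1)$ without discarding an event of non-negligible probability. The paper's resolution is to drop Bennett and use the Chernoff method directly. Conditionally on $X_1=z\in S$, write $Z_k=-\log|z-X_k|+U_\mu(z)$ (mean zero). Assumption~(D) gives
\[
\PP\{Z_k>t\}=\PP\{|z-X_k|<e^{U_\mu(z)-t}\}\le C e^{\epsilon(M-t)},
\]
while $\PP\{Z_k<-t\}=0$ for large $t$ by compactness of $S$. Thus $Z_k$ is sub-exponential uniformly in $z\in S$, so $\E[e^{\theta Z_k}]\le e^{\tau\theta^2}$ for all $|\theta|<\theta_0$, with $\tau,\theta_0$ independent of $z$. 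The Chernoff bound then gives
\[
\PP\Big\{\sum_{k=2}^n Z_k\ge \tfrac{m}{2}(n-1)\Big\}\le e^{-\theta(n-1)m/2}\big(\E e^{\theta Z_2}\big)^{n-1}\le e^{-c_1 n}
\]
for a suitable small $\theta$, and integrating over $z\sim\mu$ yields the lemma. The point is that the sub-exponential tail from~(D) already controls the near-collision contribution at the level of the moment generating function, without any truncation argument.
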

First we prove the corollary assuming the above Lemma.
\begin{proof}[Proof of Corollary~\ref{cor:smalllemniscate}]
Let $G_i$ be the connected component of $\Lambda_n$ containing $X_i$.
Then by Bernstein's inequality we have
\be\label{eq:Bern}
|p_n'(X_i)| \leq C\frac{n^2}{\text{diam}(G_i)} \| p_n \|_{\partial G_i} = C \frac{n^2}{\diam(G_i)}.
\ee
By Lemma \ref{lemma:logP'} we have
$$|p_n'(X_i)| \geq \exp \left\{ \frac{m}{2}(n-1) \right\}, \quad \text{w.o.p.}$$
and we conclude from \eqref{eq:Bern} that
\be\label{eq:diam}
\diam(G_i) \leq C n^2 \exp \left\{ -\frac{m}{2}(n-1) \right\}, \quad \text{w.o.p.}
\ee
The event $\Lambda_n \subset \bigcup_{k=1}^n \D_{r_n}(X_k)$
occurs if $\diam (G_i) < r_n$ for each $i=1,2,...,n$. Using \eqref{eq:diam} and a union bound, all these events occur with  overwhelming probability if we choose $r_n = \exp\{-c_0 n\}$ for a suitable $c_0$.
\end{proof}
It remains to prove Lemma \ref{lemma:logP'}.
\begin{proof}[Proof of Lemma \ref{lemma:logP'}]
We have
\begin{align}\label{eq:beginlemma}
\PP\left\{ \log|p_n'(X_1)| \leq \frac{m}{2}(n-1) \right\} &= \int_S  \PP \left\{ \log|p_n'(X_1)| \leq \frac{m}{2}(n-1) \big\rvert X_1 = z\right\} d\mu(z) \\
&= \int_S \underbrace{\PP \left\{ \sum_{k=2}^n \log|z-X_k| \leq \frac{m}{2}(n-1) \right\}}_{(*)} d\mu(z).
\end{align}
Let us rewrite the integrand $(*)$ as
$$(*) = \PP \left\{ Z \geq \left( U_\mu(z) - \frac{m}{2}\right)(n-1) \right\} , \quad Z=(n-1)U_\mu(z) - \sum_{k=2}^n \log|z-X_k|.$$
Then we have (with $\theta$ to be chosen below)
\begin{align}
(*) &= \PP\left\{ e^{\theta Z} \geq e^{\theta(n-1)(U_\mu - m/2)} \right\} \nonumber \\
 \text{(since $U_\mu \geq m$)}\quad  &\leq \PP\left\{ e^{\theta Z} \geq e^{\theta(n-1)(m/2)} \right\} \nonumber \\
 &\leq e^{-\theta (n-1)(m/2)} \E e^{\theta Z}. \label{eq:mgfboundforprob}
\end{align}

Let $Z_k = -\log|z-X_k|+U_\mu(z)$ so that $Z=Z_2+\ldots +Z_n$. As $X_i$ are i.i.d., so are $Z_i$ and we have
\begin{align*}
    \EE e^{\theta Z} &= \left( \EE e^{\theta Z_2}\right)^{n-1}.
\end{align*}
We claim that there exist $\tau<\infty$ and $\theta_0>0$ (not depending on $z\in S$) such that  
\begin{align}\label{claim:subexp}
\EE[e^{\theta Z_2}]\le e^{\tau\theta^2} \;\;\; \mbox{ for }|\theta|<\theta_0.
\end{align} 
Assuming this, the proof can be completed as follows:
\begin{align}
(*)&\leq e^{-\theta(n-1)m/2}  e^{(n-1)\tau \theta^2} \nonumber \\
&= e^{ -\frac14 m \theta(n-1)} \label{eq:subexpbound}
\end{align}
provided we choose $\theta<\frac{m}{4\tau}$. Using this in \eqref{eq:beginlemma} we obtain
\be 
\PP\left\{ \log|p_n'(X_1)| \leq \frac{m}{2}(n-1) \right\} 
\leq e^{ -\frac14 m \theta(n-1)},\ee
which implies the statement in the lemma.

It remains to prove \eqref{claim:subexp}. Assumption (D) in definition~\ref{A} yields that for $z\in S$,
\begin{align*}
\PP\{Z_1>t\}&=\PP\{|z-X_1|\le e^{U_{\mu}(z)-t}\} \\
&\le Ce^{\epsilon(M-t)}
\end{align*}
where $M=\sup_{z\in S}U_{\mu}(z)$. On the other hand, $\PP\{Z_1<-t\}=0$ for large $t$, hence by choosing a smaller $\epsilon$ if necessary, we have the bound
$$
\PP\{|Z_1|>t\}\le 2e^{-\epsilon t}.
$$
A random variable satisfying the above tail bound is said to be {\em sub-exponential} (see Section~2.7 in \cite{vershynin}). It is well-known (see the implication $(a)\implies (e)$ of Proposition~2.7.1 in \cite{vershynin}) that if a sub-exponential random variable has zero  mean, then  \eqref{claim:subexp} holds.
\end{proof}

Now we outline the argument for the proof of Theorem~\ref{thm:pospot}
\begin{proof}[Proof of Theorem~\ref{thm:pospot}]
The same argument (basically that $-\log|z-X_1|+U_{\mu}(z)$ has sub-exponential distribution) that led to \eqref{eq:subexpbound} shows that there exists $\theta>0$ 
\begin{align}\label{eq:ubdlogpn}
\PP\{\log|p_n(z)|<\frac12 m n \}\le e^{-\theta n}
\end{align}
for any $z\in L$. Let $r_n=e^{-\frac{\theta}{4} n}$. Then, if $z\in L\setminus \bigcup_{k=1}^nB(X_k,r_n)$, we have
$$
|\nabla \log |p_n(z)||=\big| \sum_{k=1}^n\frac{1}{z-X_k}\big| \; \le \; \frac{n}{r_n}.
$$
Therefore, if $z\in L\setminus \bigcup_{k=1}^nB(X_k,(1+\frac{m}{4})r_n)$, then combining the bound on the gradient  with \eqref{eq:ubdlogpn},  we get
$$
\PP\left\{\inf_{B(z,\frac14 mr_n)}\log|p_n|\ge \frac14 mn\right\}\ge 1-e^{-\theta n}.
$$
Assuming without loss of generality that $m\le 1$, we may choose a net of $C/r_n^2$ points in $L$ such that every of point of $L\setminus \bigcup_{k=1}^nB(X_k,2r_n)$  is within distance $mr_n/4$ of one of the points of the net. Then, $\log|p_n|>\frac14 mn$ everywhere on $L\setminus \bigcup_{k=1}^nB(X_k,2r_n)$, with probability at least $1-\frac{C}{r_n^2}e^{-\theta n}\ge 1-Ce^{-\frac{\theta}{2}n}$, by our choice of $r_n$.
\end{proof}

\section{Proof of Theorem~\ref{thm:unifcirc}}\label{sec:unifcirc}
First we claim that   $\Lambda_n\subseteq (1+\epsilon)\D$ w.o.p. for any $\epsilon>0$. Deterministically, $\Lambda_n\subseteq 2\D$, since $\mu$ is supported on $\mathbb{S}^1$. Further, $U_{\mu}(z)=\log_+ |z|$, hence $L=\{z\ : \ 1+\epsilon\le |z|\le 2\}$ is a compact subset of $\Omega^{+}$. By Theorem~\ref{G} or Theorem~\ref{thm:pospot}, we see that $L\cap \Lambda_n=\emptyset$ w.o.p.  proving that $\Lambda_n\subseteq (1+\epsilon)\D$ w.o.p.

Thus, it suffices to consider $\Lambda_n\cap \D$. Consider
$$
g_n(z)=\frac{1}{\sqrt{n}}\sum_{k=1}^n\log|z-X_k|
$$
for $z\in \D$. As $X_k$ are uniform on $\mathbb{S}^1$, it follows that $\E[\log|z-X_1|]=0$.  Let 
$$
K(z,w)=\E[(\log |z-X_1|)(\log |w-X_1|)]=\frac{1}{2\pi}\int_{0}^{2\pi}\log|z-e^{i\theta}| \log|w-e^{i\theta}| \ d\theta.
$$
Hence $\E[g_n(z)]=0$ and $\E[g_n(z)g_n(w)]=K(z,w)$.

Let $g$ be the (real-valued) Gaussian process on $\D$ with expectation $\E[g(z)]=0$ and covariance function $\E[g(z)g(w)]=K(z,w)$. Then by the central limit theorem, it follows that
$$
(g_n(z_1),\ldots ,g_n(z_k))\stackrel{d}{\rightarrow} (g(z_1),\ldots ,g(z_k))
$$
for any $z_1,\ldots ,z_k\in \D$. We observe that $g_n(0)=0$ and claim that $\sup_{r\D} |\nabla g_n|$ is tight, for any $r<1$. By a well-known criterion for tightness of measures (on the space $C(\D)$ endowed with the topology of uniform convergence on compacts),  this proves that $g_n\to g$ in distribution, as processes (see Theorem~7.2 in \cite{billingsley}).

To prove the  tightness of $\sup_{r\D} |\nabla g_n|$, fix $r<s<1$ and note that $\nabla g_n(z)$ is essentially the same as $F_n(z)=\frac{1}{\sqrt{n}}\sum_{k=1}^n\frac{1}{z-X_k}$ which is holomorphic on $\D$. By Cauchy's integral formula, for $|z|<r$, 
\begin{align*}
|F_n(z)|^2 &=\big|\frac{1}{2\pi}\int_0^{2\pi}\frac{F_n(se^{i\theta})}{z-se^{i\theta}}ise^{i\theta}d\theta\big|^2 \\
&\le \left(\frac{1}{2\pi}\int_{0}^{2\pi}|F_n(se^{i\theta})|^2d\theta\right)\left(\frac{1}{2\pi}\int_0^{2\pi}\frac{1}{|z-se^{i\theta}|^2}d\theta\right) \\
&\le \frac{1}{(s-r)^2}\frac{1}{2\pi}\int_{0}^{2\pi}|F_n(se^{i\theta})|^2d\theta.
\end{align*}
The bound does not depend on $z$, hence taking expectations,
\begin{align*}
\E[(\sup_{r\D}|F_n|)^2]&\le \frac{1}{(s-r)^2}\frac{1}{2\pi}\int_{0}^{2\pi}\E\left[|F_n(se^{i\theta})|^2\right]d\theta \\ 
&\le  \frac{1}{(s-r)^2}\frac{1}{2\pi}\int_{0}^{2\pi}\E\left[\frac{1}{|se^{i\theta}-X_1|^{2}}\right]d\theta \\
& \le  \frac{1}{(s-r)^2(1-s)^2}.
\end{align*}
The boundedness in $L^2$ implies tightness of the distributions of $F_n$, as claimed.

In order to formulate a precise statement on almost sure convergence it is necessary to
construct $g_n$ and $g$ on a single probability space.
One way to accomplish that is by the Skorokhod representation theorem (see Theorem~6.7 in \cite{billingsley}) from which it follows that $g_n$ and $g$ can be constructed on one probability space so that $g_n\to g$ uniformly on compacta, a.s. Hence, the proof of Theorem~\ref{thm:unifcirc} will be complete if we prove the following lemma.

\begin{lemma}\label{lem:inradiusconvergence} Let $f_n,f;\D\to \R$ be smooth functions such that $\{f=0\}\cap \{\nabla f=0\}=\emptyset$. Suppose $f_n\to f$ uniformly on compact sets of $\D$. Then, $\rho(\{f_n<0\})\to \rho(\{f<0\})$.
\end{lemma}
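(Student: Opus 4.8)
The plan is to prove convergence of the inradius by establishing two inequalities: $\liminf_n \rho(\{f_n<0\}) \geq \rho(\{f<0\})$ and $\limsup_n \rho(\{f_n<0\}) \leq \rho(\{f<0\})$. Write $\Omega = \{f<0\}$ and $\Omega_n = \{f_n<0\}$, and set $\rho = \rho(\Omega)$. The hypothesis that $f$ has no critical point on its zero set is what makes both directions work: it rules out the kind of degeneracy (cusps, slits) that makes inradius discontinuous, exactly the phenomenon flagged in the proof of Corollary~\ref{cor:convergenceofinradius}.

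For the lower bound, fix $a < \rho$ and a closed disk $\overline{\D(z_0,a)} \subseteq \Omega$. On this compact set $f$ attains a negative maximum $-\eta < 0$, and since $f_n \to f$ uniformly on compacta we get $f_n < -\eta/2 < 0$ there for all large $n$; hence $\overline{\D(z_0,a)} \subseteq \Omega_n$ and $\rho(\Omega_n) \geq a$. Letting $a \uparrow \rho$ gives $\liminf_n \rho(\Omega_n) \geq \rho$. This direction uses only uniform convergence and no hypothesis on $f$.

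For the upper bound we need the non-degeneracy hypothesis. The idea is that because $\nabla f \neq 0$ on $\{f=0\}$, the zero set is a smooth $1$-manifold and $\Omega$ satisfies a uniform two-sided collar/exterior-ball type condition locally: for each boundary point $w$ there is a point $w'$ just outside $\overline{\Omega}$ at controlled distance, so any disk contained in a slightly enlarged set $\Omega_\epsilon$ is contained in $\Omega$ after shrinking its radius by $O(\epsilon)$ — essentially the estimate \eqref{eq:exteriorballcondition} but now valid on all of $\partial\Omega$ since there are no critical points at all. (Strictly one only needs this on a neighborhood of $\partial\Omega$ intersected with whatever compact set captures the inradius; one should note that $\Omega$ is bounded, or at least that its inradius is attained — here one can restrict to a large disk $r\D$ with $\rho(\{f<0\}\cap r\D) = \rho(\{f<0\})$, using that the relevant disks lie in a fixed compact set.) Now suppose $\overline{\D(z_n,\rho(\Omega_n))} \subseteq \Omega_n$. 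Uniform convergence on a compact neighborhood of $\partial\Omega$ gives: for any $\epsilon > 0$, for all large $n$ one has $\Omega_n \subseteq \Omega_\epsilon$ on that neighborhood, hence (after checking the disks don't escape to where control fails) $\overline{\D(z_n, \rho(\Omega_n))} \subseteq \Omega_\epsilon$, so $\rho(\Omega_n) \leq \rho(\Omega_\epsilon) \leq \rho + C\epsilon$ by the exterior-ball argument. Letting $n\to\infty$ and then $\epsilon \downarrow 0$ yields $\limsup_n \rho(\Omega_n) \leq \rho$.

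The main obstacle is the upper bound, specifically making rigorous the passage ``$\rho(\Omega_\epsilon) \to \rho(\Omega)$ as $\epsilon\downarrow 0$'' and controlling where the near-optimal disks for $\Omega_n$ can sit. The cleanest route is to reuse the exterior-ball lemma from the proof of Corollary~\ref{cor:convergenceofinradius} almost verbatim: the hypothesis $\{f=0\}\cap\{\nabla f = 0\}=\emptyset$ gives, by the implicit function theorem, that $\partial\Omega$ is a smooth curve along which one can push outward into $\{f>0\}$ a definite amount, so \eqref{eq:exteriorballcondition} holds with uniform constants near $\partial\Omega$; combined with the fact that $\Omega_n$ differs from $\Omega$ only by an $\epsilon$-perturbation of the defining function near the boundary, one gets $\Omega_n \subseteq \Omega_{C\epsilon}$ in that neighborhood. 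A minor technical point to handle is that $\Omega$ and $\Omega_n$ need not be bounded as stated, but the inradius is governed by a bounded region (the relevant disks have radius at most $\rho+1$, say, so lie in a fixed compact set once we know $\liminf \geq \rho > 0$ — and if $\rho = 0$ the statement is trivial from the lower bound unless $\rho(\Omega_n)\not\to 0$, a case also excluded by the exterior-ball argument applied near any single boundary point), so one may freely intersect with a large fixed disk throughout.
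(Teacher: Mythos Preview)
Your lower bound argument is the same as the paper's. For the upper bound you take a genuinely different route, importing the exterior-ball machinery from the proof of Corollary~\ref{cor:convergenceofinradius}. This can be made to work, but the paper's argument is shorter and uses the non-degeneracy hypothesis more economically. After a clean reduction to $(1-\epsilon)\D$ via the elementary inequality $\rho(U)-\epsilon\le \rho(U\cap(1-\epsilon)\D)\le \rho(U)$ (which disposes of all your compactness worries in one line), uniform convergence gives the sandwich $\{f<-\delta\}\subseteq\{f_n<0\}\subseteq\{f<\delta\}$, so the problem reduces to continuity of $\delta\mapsto\rho(\{f<\delta\})$ at $\delta=0$. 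For right-continuity the paper does not use any exterior-ball condition at all: it simply extracts a convergent subsequence of near-optimal disks $\D(z_n,r_n)\subseteq\{f\le\tfrac1n\}$ to obtain a limit disk $\D(z_0,r_0)\subseteq\{f\le 0\}$, and then invokes the hypothesis only in the final sentence --- if $f$ vanished at an interior point of this disk it would be a local maximum of $f$, hence a critical point, contradiction --- so in fact $\D(z_0,r_0)\subseteq\{f<0\}$. No implicit function theorem, no uniform gradient lower bound, no exterior-ball constants. Your geometric approach yields a bit more (a quantitative estimate $\rho(\Omega_\epsilon)\le\rho+C\epsilon$), but at the cost of having to pin down the uniform constants on a suitable compact set, which you flag but do not fully carry out; the paper's compactness-plus-local-maximum trick sidesteps all of that.
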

Indeed, applying this to $g_n,g$, we see that $\rho(\Lambda_n\cap \D)\to \rho(\{g<0\})$ almost surely. On the other hand, for any $\epsilon>0$, Theorem~\ref{thm:pospot} shows that  $\Lambda_n\cap ((1+\epsilon)\D)^c$ is contained in a union of $n$ disks of radius $e^{-cn}$, w.o.p. Putting these together, $\rho(\Lambda_n)\to \rho(\{g<0\})$ a.s. and hence in distribution. This  completes the proof of the convergence claim in Theorem~\ref{thm:unifcirc}.

\begin{proof}[Proof of Lemma~\ref{lem:inradiusconvergence}]
For any $U\subseteq \D$, it is clear that $\rho(U)-\epsilon\le \rho(U\cap (1-\epsilon)\D)\le \rho(U)$. Applying this to $U=\{f_n<0\}$ and $U=\{f<0\}$, we see that to show that  $\rho(\{f_n<0\})\to \rho(\{f<0\})$, it is sufficient to show that $\rho(\{f_n<0\}\cap (1-\epsilon)\D)\to \rho(\{f<0\}\cap (1-\epsilon)\D)$ for every $\epsilon>0$. On $(1-\epsilon)\D$, the convergence is uniform, hence for any $\delta>0$, we have $\{f<-\delta\}\subseteq \{f_n<0\}\subseteq \{f<\delta\}$ for sufficiently large $n$. It remains to show that $\delta\mapsto \rho(\{f<\delta\})$ is continuous at $\delta=0$.

First we show  that $\rho(\{f<-\delta\})\uparrow \rho(\{f<0\})$ as $\delta\downarrow 0$. If $B(z,r)\subseteq \{f<0\}$, then for any $\epsilon>0$, the maximum of $f$ on $B(z,r-\epsilon)$ is some $-\delta<0$. Hence $\rho(\{f\le -\delta\})\ge r-\epsilon$ proving that $\rho(\{f<-\delta\})\uparrow \rho(\{f<0\})$.

Next we show that $\rho(\{f\le \delta_n\})\downarrow \rho(\{f\le 0\})$ for some $\delta_n\downarrow 0$. Let $r_n= \rho(\{f\le \frac{1}{n}\})$ and find $z_n$ such that $B(z_n,r_n)\subseteq \{f\le \frac{1}{n}\}$. Let $r_n\downarrow r_0$ and $z_n\to z_0$ without loss of generality. Then if $w\in B(z_0,r_0)$, then $w\in B(z_n,r_n)$ for large enough $n$, hence $f(w)\le \frac{1}{n}$ for large $n$. Thus $f\le 0$ on $B(z_0,r_0)$ showing that $\rho(\{f\le 0\})\ge \lim\limits_{\delta\downarrow 0}\rho(\{f\le \delta\})$.

From the assumption that $\{f=0\}\cap \{\nabla f=0\}=\emptyset$, we claim that  $\rho(\{f\le 0\})=\rho(\{f<0\})$. Indeed, if $B(z,r)\subseteq \{f\le 0\}$, then in fact $B(z,r)\subseteq \{f< 0\}$. Otherwise, we would get $w\in B(z,r)$ with $f(w)=0$ which implies that $w$ is a local maximum of $f$ and hence $\nabla f(w)=0$.

This proves the continuity of $\delta\mapsto \rho(\{f<\delta\})$ at $\delta=0$, and  hence the lemma.
\end{proof}

This completes the proof of the first part that $\rho_n = \rho(\{g_n<0\})$ converges in distribution to
$\rho=\rho(\{g<0\})$. 
To show that $\mathbb P(\{\rho<\epsilon\})>0$, it suffices to show that $g>0$ on $(1-\epsilon)\D\cap \{|\Im z|>\epsilon\}$ with positive probability. To show that $\mathbb P(\{\rho>\frac12-\epsilon\})>0$, it suffices to show that $g<0$ in  $(1-\epsilon)\D\cap \{|\Im z|>\epsilon\}$ with positive probability. We do this in two steps.
\begin{enumerate}
\item There exist $u_0:\D\to \R$, harmonic with $u_0(0)=0$ such that $u_0<0$ on $(1-\epsilon)\D\cap \{|\Im z|>\epsilon\}$. This is known, see either the proof of Theorem~6.1 of \cite{NSP} or take $\log|p|$ of the polynomial $p$ constructed in Lemma~5 of  Wagner~\cite{wagner}.
\item For any $u:\D\to \R$, harmonic with $u(0)=0$ and any $r<1$ and $\epsilon>0$, we claim that $\|g-u\|_{\sup(r\D)}<\epsilon$ with positive probability. Applying this to  $u_0$ and $-u_0$ from the previous step show that $\rho>\frac12-\epsilon$ with positive probability and $\rho<\epsilon$  with positive probability.

To this end, we observe that the process $g$ can be represented as 
$$
g(z)=\Re\sum_{k=1}^{\infty} \frac{2}{k}a_k z^k
$$
where $a_k$ are i.i.d. standard complex Gaussian random variables. The covariance of $g$ defined as above is
$$
\E[g(z)g(w)]= \sum_{k\ge 1}\frac{1}{k^2}(z^k\bar{w}^k+w^k\bar{z}^k)
$$
which can be checked to match with the integral expression for $K(z,w)$ given earlier. Given any harmonic $u:\D\to \R$ with $u(0)=0$, write it as
$$
u(z)=\Re \sum_{k\ge 1}c_kz^k
$$
and  choose $N$ such that 
$$
\|\sum_{k>N}c_kz^k\|_{\sup(r\D)}<\epsilon.
$$
If both the events
\begin{align*}
\mathcal A_N&=\left\{\|\sum_{k>N}\frac{a_k}{k}z^k\|_{\sup(r\D)}<\epsilon\right\}, \\
\mathcal B_N&=\left\{|\frac{2a_k}{k}-c_k|<\frac{\epsilon}{N} \mbox{ for }1\le k\le N\right\}
\end{align*}
 occur, then $|g-u|<3\epsilon$ on $r\D$. As $\mathcal A_N$ and $\mathcal B_N$ are independent and have positive probability, we also have $\PP(\mathcal A_N\cap \mathcal B_N)>0$. 

\end{enumerate}

%

\section{Proof of Theorem \ref{Gin}}\label{sec:Ginibre}
 The idea of the proof proceeds along earlier lines: first we fix $t>0$ and show that $\log|p_n(z)|$ is negative w.o.p. for a fixed $z$ lying on $|z|= 1-t$.  It then follows from a net argument that the whole circle (and hence the disk) is contained in $\Lambda_n$ w.o.p. 

\vspace{0.1in}

\noindent Let $t\in (0,\frac{1}{100})$ and fix $z$ with $|z|= 1-t$. Taking logarithms, we have as before that
$$\log|p_n(z)| = \sum_{k=1}^{n}\log|z-X_k|,$$
except now the roots are no longer i.i.d. Define $F_t:\mathbb{C}\rightarrow\mathbb{R}$ by

\[F_t(w)=
\begin{cases}
       \log \frac{1}{t}, \hspace{0.05in} & |z-w|\geq  \frac{1}{t}\\
       \log|z-w|, \hspace{0.05in} & t < |z-w|< \frac{1}{t} \\
       \log t,   \hspace{0.05in}& |z-w|\leq t.
\end{cases}
\]
Next, we write
\begin{align*}\label{lip1}
\log|p_n(z)|&= \sum_{k=1}^{n}F_t(X_k) +\sum_{k: |z-X_k|\geq\frac{1}{t}}\left(\log|z- X_k| - \log\frac{1}{t}\right) +  \sum_{k: |z-X_k|\leq t}\left(\log|z- X_k| - \log t\right) \\
            &=: \mathcal{L}_1 + \mathcal{L}_2+ \mathcal{L}_3.
\end{align*}
Since the term $\mathcal{L}_3$ is negative, we have 
\begin{equation}\label{lip2}
\PP\left(\log|p_n(z)|\geq -\frac{t}{4} n\right) \leq \PP\left(\mathcal{L}_1 + \mathcal{L}_2 \geq - \frac{t}{4} n\right)
\end{equation}
We claim that the right hand side of \eqref{lip2} decays exponentially. For that we will need the following
\begin{prop}\label{prop2}
Fix $t>0$.
There exist constants $c_t, c_2 > 0$ such that for all large $n$, we have $$\PP\left(\mathcal{L}_1\geq -\frac{t}{2}n\right)\leq5\exp(-c_t n),\hspace{0.1in}\PP(\mathcal{L}_2\geq \frac{t}{4}n)\leq n\exp(-c_2n).$$
\end{prop}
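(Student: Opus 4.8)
The plan is to bound the two probabilities in Proposition~\ref{prop2} separately. Throughout, $\widetilde{K}_n$ denotes the correlation kernel of the rescaled Ginibre process, which is a self-adjoint orthogonal projection of rank $n$ on $L^2(\C,dA)$ with diagonal $\widetilde{K}_n(w,w)=\tfrac n\pi e^{-n|w|^2}\sum_{k=0}^{n-1}\tfrac{(n|w|^2)^k}{k!}=\tfrac n\pi\,\PP(\mathrm{Poi}(n|w|^2)\le n-1)$. The estimate for $\mathcal{L}_2$ is elementary: every term of $\mathcal{L}_2$ is nonnegative, so $\{\mathcal{L}_2\ge\tfrac t4 n\}$ forces some $X_k$ with $|z-X_k|>1/t$, and since $|z|=1-t<1$ while $1/t>100$ this forces $|X_k|\ge 2$. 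Hence, by Markov's inequality applied to the counting variable,
\[
\PP\left(\mathcal{L}_2\ge\tfrac t4 n\right)\le\PP\big(\exists\,k:\ |X_k|\ge 2\big)\le\E\big[\#\{k:\ |X_k|\ge 2\}\big]=\int_{|w|\ge 2}\widetilde{K}_n(w,w)\,dA(w).
\]
Substituting $u=|w|^2$ and using the Chernoff bound $\PP(\mathrm{Poi}(nu)\le n)\le e^{-n(u-1-\log u)}$ for $u>1$, together with $u-1-\log u\ge(3-\log 4)+\tfrac34(u-4)$ for $u\ge 4$, a one-line integration gives $\int_{|w|\ge 2}\widetilde{K}_n(w,w)\,dA(w)\le\tfrac{4}{3n}e^{-(3-\log 4)n}$, which is $\le n e^{-c_2 n}$ for any fixed $c_2<3-\log 4$ and all large $n$.

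For $\mathcal{L}_1$ I would first estimate the mean and show $\E[\mathcal{L}_1]\le-\tfrac9{10}tn$ for all large $n$. By the defining property of the one-point function, $\E[\mathcal{L}_1]=\int_{\C}F_t(w)\widetilde{K}_n(w,w)\,dA(w)$. Writing $\widetilde{K}_n(w,w)=\tfrac n\pi q_n(|w|^2)$ with $q_n(u)=\PP(\mathrm{Poi}(nu)\le n-1)\in[0,1]$, Poisson tail bounds and dominated convergence give $\int_0^\infty|q_n(u)-\mathbf{1}_{u<1}|\,du\to 0$ (in fact $=O(n^{-1/2})$, since $u-1-\log u$ vanishes quadratically at $u=1$), and since $|F_t|\le\log(1/t)$ this yields $\E[\mathcal{L}_1]=\tfrac n\pi\int_{\D}F_t(w)\,dA(w)+o(n)$. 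On $\D$ one has $F_t(w)=\max(\log|z-w|,\log t)$ because $|z-w|<2<1/t$, so $\tfrac1\pi\int_{\D}F_t\,dA$ lies between $U_{\mu_1}(z)$ and $U_{\mu_1}(z)+\tfrac1\pi\int_{B(z,t)}(\log t-\log|z-w|)\,dA=U_{\mu_1}(z)+\tfrac{t^2}{2}$. By \eqref{eq:diskpot} with $r=1$ and $|z|=1-t$ we have $U_{\mu_1}(z)=\tfrac{|z|^2-1}{2}=-t+\tfrac{t^2}{2}$, hence $\tfrac1\pi\int_{\D}F_t\,dA\le -t+t^2\le-\tfrac{99}{100}t$ (recall $t<\tfrac1{100}$), so $\E[\mathcal{L}_1]\le-\tfrac9{10}tn$ for all large $n$.

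For the concentration of $\mathcal{L}_1$ about its mean I would use the layer-cake identity $F_t(w)=\log t+\int_t^{1/t}\mathbf{1}[|z-w|>r]\,\tfrac{dr}{r}$, which gives
\[
\mathcal{L}_1=n\log(1/t)-\int_t^{1/t}N_r\,\frac{dr}{r},\qquad N_r:=\#\{k:\ |z-X_k|\le r\}.
\]
Because $\widetilde{K}_n$ is a rank-$n$ orthogonal projection, the classical fact that the number of points of a determinantal process in a Borel set has the law of a sum of independent Bernoulli variables (with parameters the eigenvalues of the kernel compressed to that set) shows each $N_r$ is such a sum, so $\E N_r\le n$ and Bernstein's inequality gives $\PP(|N_r-\E N_r|\ge\epsilon n)\le 2e^{-c(\epsilon)n}$ for each fixed $\epsilon>0$. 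Fix a geometric grid $t=r_0<\cdots<r_L=1/t$ with $\log(r_i/r_{i-1})=\tfrac{2\log(1/t)}{L}$; by monotonicity of $r\mapsto N_r$, both $\int_t^{1/t}N_r\,\tfrac{dr}{r}$ and $\int_t^{1/t}\E N_r\,\tfrac{dr}{r}$ are trapped between the lower and upper Riemann sums over this grid, each trap of width at most $\tfrac{2\log(1/t)}{L}n$ (using $N_r\le n$). On the event $\{|N_{r_i}-\E N_{r_i}|\le\epsilon n\ \forall i\}$ the random and deterministic upper Riemann sums differ by at most $2\epsilon n\log(1/t)$, so $|\mathcal{L}_1-\E\mathcal{L}_1|=\big|\int_t^{1/t}(N_r-\E N_r)\tfrac{dr}{r}\big|\le 2\log(1/t)\big(\epsilon+\tfrac1L\big)n$; choosing $\epsilon=\epsilon(t)$ and $L=L(t)$ with $2\log(1/t)(\epsilon+\tfrac1L)\le\tfrac t4$, this is $\le\tfrac t4 n$. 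Since $\E[\mathcal{L}_1]\le-\tfrac9{10}tn$, on this event $\mathcal{L}_1\le-\tfrac9{10}tn+\tfrac t4 n<-\tfrac t2 n$, so a union bound over the $L+1$ radii gives $\PP(\mathcal{L}_1\ge-\tfrac t2 n)\le 2(L+1)e^{-c(\epsilon)n}\le 5e^{-c_t n}$ for all large $n$ with $c_t:=\tfrac12 c(\epsilon)$.

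Feeding both estimates into \eqref{lip2} proves the proposition. The main obstacle is the exponential concentration of the bounded linear statistic $\mathcal{L}_1$ of the \emph{dependent} Ginibre eigenvalues; the layer-cake device is precisely what lets us bypass a general determinantal concentration inequality, by reducing matters to the counting functions $N_r$, which concentrate exponentially for free because the Ginibre kernel is a projection. The only other delicate point—uniformity in $n$ of the Riemann-sum error—is handled by the geometric grid together with the trivial bound $N_r\le n$.
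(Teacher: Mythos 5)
Your proposal is correct, but it takes a genuinely different route from the paper in both halves. For $\mathcal{L}_2$, the paper invokes Kostlan's theorem (the moduli $|\lambda_k|^2$ are distributed as independent Gamma variables), a union bound and a Cram\'er--Chernoff estimate, whereas you use a first-moment (Markov) bound against the explicit one-point intensity of the rescaled Ginibre process together with Poisson tail bounds; both give $ne^{-c_2n}$, and your route trades Kostlan's theorem for the kernel formula (note a trivial slip: the intensity carries a factor $n$, so the integral evaluates to $\tfrac43 e^{-(3-\log 4)n}$ rather than $\tfrac{4}{3n}e^{-(3-\log 4)n}$ --- immaterial for the stated bound). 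For $\E[\mathcal{L}_1]$, the paper argues softly via the circular law for the expected empirical measure, while you compare the finite-$n$ intensity to $\tfrac n\pi \mathbf{1}_{\D}$ quantitatively (an $O(\sqrt n\,\log(1/t))$ error via the Poisson representation), which is more self-contained and even quantitative; both land on the same limit $-t+t^2$ computed from \eqref{eq:diskpot}. The real divergence is the concentration step: the paper applies the Pemantle--Peres concentration inequality for Lipschitz-$1$ functionals of determinantal processes to $\mathcal{L}_1/\log(1/t)$, whereas you use the layer-cake identity to reduce $\mathcal{L}_1$ to the counting functions $N_r$, invoke the fact that counts of a determinantal process with Hermitian (here projection) kernel are sums of independent Bernoulli variables (this is the Hough--Krishnapur--Peres--Vir\'ag theorem and deserves a citation), and finish with Hoeffding plus a union bound over a $t$-dependent geometric grid, using monotonicity of $r\mapsto N_r$ and $N_r\le n$ to control the Riemann-sum error uniformly in $n$. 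Your approach replaces the cited concentration black box by a more elementary (and in principle more flexible) mechanism at the cost of some discretization bookkeeping; the paper's is shorter given the external theorem. Either way the constants depend only on $t$, which is all the proposition requires, and feeding the two estimates into \eqref{lip2} completes the argument exactly as in the paper.
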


\vspace{0.1in}

\noindent Assume the Proposition is true for now. Then, it is easy to see that the right hand side of \eqref{lip2} goes to $0$ exponentially with $n.$ Indeed,

\begin{align*}
\PP\left(\mathcal{L}_1 + \mathcal{L}_2 \geq - \frac{t}{4} n\right) & = \PP\left(\mathcal{L}_1 + \mathcal{L}_2 \geq - \frac{t}{4} n, \; \mathcal{L}_2< \frac{t}{4}n\right) + \PP\left(\mathcal{L}_1 + \mathcal{L}_2 \geq - \frac{t}{4} n, \; \mathcal{L}_2\geq \frac{t}{4}n\right)\\
&\leq \PP\left(\mathcal{L}_1 \geq - \frac{t}{2} n\right) + \PP\left(\mathcal{L}_2\geq \frac{t}{4}n\right)\\
&\leq 5\exp(-c_tn) + n\exp(-c_2n).
\end{align*}
which establishes the claim. We now proceed with the proof of Proposition \ref{prop2}. 
\begin{proof}[Proof of Proposition \ref{prop2}]

\vspace{0.05in}

\textbf{Step $1$: Estimate on $\mathcal{L}_2$} 

\vspace{0.05in}

\noindent Let $N_t = |\{k: |z- X_k|\geq\frac{1}{t} \}|$. If $\mathcal{L}_2\ge \frac{t}{4}n$, then we must have $N_t\ge 1$, which has probability at most $e^{-cn}$ for some $c>0$. To see this, let us recall the following fact about eigenvalues of the Ginibre ensemble.

%
%
%
\begin{lemma}[Kostlan~\cite{kostlan}, \cite{GAFbook}] \label{lem:Lambda}
Let $\lambda_j$ be the eigenvalues (indexed in order of increasing modulus) of a Ginbre random matrix (un-normalized). Then, $$\{|\lambda_1|^2, |\lambda_2|^2,..., |\lambda_n|^2 \}\sim\{Y_1, Y_2,..., Y_n\},$$ 
where $Y_j$ is a sum of $j$ i.i.d. $Exp(1)$ random variables.
\end{lemma}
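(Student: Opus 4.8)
The plan is to compute directly from the joint density \eqref{eq:ginijtdensity} of the unnormalized eigenvalues and to reduce everything to the radial variables. Writing $\lambda_k = r_k e^{i\theta_k}$ and using the Vandermonde identity $\prod_{j<k}(\lambda_k-\lambda_j) = \det\big(\lambda_i^{\,j-1}\big)_{i,j=1}^n$, we have
$$\prod_{j<k}|\lambda_j-\lambda_k|^2 = \Big|\sum_{\sigma\in S_n}\mathrm{sgn}(\sigma)\prod_{i=1}^n \lambda_i^{\,\sigma(i)-1}\Big|^2 = \sum_{\sigma,\tau\in S_n}\mathrm{sgn}(\sigma)\,\mathrm{sgn}(\tau)\prod_{i=1}^n \lambda_i^{\,\sigma(i)-1}\,\overline{\lambda_i}^{\,\tau(i)-1}.$$
First I would substitute this into \eqref{eq:ginijtdensity}, pass to polar coordinates (Jacobian $\prod_k r_k\,dr_k\,d\theta_k$), and integrate out $\theta_1,\dots,\theta_n$ over $[0,2\pi)^n$. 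Since $\int_0^{2\pi} e^{i(\sigma(i)-\tau(i))\theta_i}\,d\theta_i = 2\pi\,\mathbf 1\{\sigma(i)=\tau(i)\}$, the product over $i$ survives only when $\sigma=\tau$ (and then $\mathrm{sgn}(\sigma)^2=1$), so all cross-terms cancel and the joint density of $(r_1,\dots,r_n)$ is proportional to $e^{-\sum_k r_k^2}\big(\sum_{\sigma\in S_n}\prod_i r_i^{\,2(\sigma(i)-1)}\big)\prod_k r_k$ on $(0,\infty)^n$.

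Next I would substitute $s_k = r_k^2$ (so $r_k\,dr_k = \tfrac12\,ds_k$) to find that the joint law of $\big(|\lambda_1|^2,\dots,|\lambda_n|^2\big)$ has density proportional to
$$e^{-\sum_k s_k}\sum_{\sigma\in S_n}\prod_{i=1}^n s_i^{\,\sigma(i)-1} = e^{-\sum_k s_k}\,\operatorname{perm}\!\big(s_i^{\,j-1}\big)_{i,j=1}^n, \qquad s_1,\dots,s_n>0.$$
The crucial observation is that, once normalized, this is exactly the symmetrization over $S_n$ of the product density $\prod_{i=1}^n \frac{s_i^{\,i-1}e^{-s_i}}{(i-1)!}$ of $n$ independent variables $\gamma_i\sim\mathrm{Gamma}(i,1)$: indeed $\frac1{n!}\sum_{\sigma}\prod_i \frac{s_i^{\,\sigma(i)-1}e^{-s_i}}{(\sigma(i)-1)!} = \frac{e^{-\sum_k s_k}}{n!\prod_j(j-1)!}\sum_{\sigma}\prod_i s_i^{\,\sigma(i)-1}$ because $\prod_i(\sigma(i)-1)! = \prod_j(j-1)!$, and this also pins down the normalizing constant, since integrating the permanent term by term gives $\sum_{\sigma}\prod_i \Gamma(\sigma(i)) = n!\prod_j (j-1)!$. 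I would then invoke the elementary general fact that if $T_1,\dots,T_n$ are independent with densities $f_1,\dots,f_n$, then the unordered multiset $\{T_1,\dots,T_n\}$ has law described by the symmetrized density $\frac1{n!}\sum_\sigma \prod_i f_{\sigma(i)}(s_i)$ (tested against symmetric functions). Comparing, $\{|\lambda_1|^2,\dots,|\lambda_n|^2\}$ has the same law as $\{\gamma_1,\dots,\gamma_n\}$ with $\gamma_j\sim\mathrm{Gamma}(j,1)$ independent; and since $\mathrm{Gamma}(j,1)$ is by definition the law of a sum of $j$ i.i.d.\ $\mathrm{Exp}(1)$ variables, taking $Y_j = E^{(j)}_1+\dots+E^{(j)}_j$ with all the $E^{(j)}_i$ mutually independent delivers the claimed distributional identity.

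I expect the only step requiring genuine care is the identification via symmetrization: one should argue cleanly that a symmetric probability density on $(0,\infty)^n$ of the form $Z^{-1}\operatorname{perm}\!\big(f_j(s_i)\big)$ describes the law of the unordered set of independent variables with marginals $f_j$, and check that the constant $Z$ indeed renders the expression coming from the eigenvalue density a probability density. Because the statement concerns only the unordered set $\{|\lambda_k|^2\}$, the indexing ``in order of increasing modulus'' is immaterial beyond being one measurable labelling of that multiset. (Alternatively one could run the same computation through the determinantal structure of the Ginibre ensemble, whose correlation kernel is a sum of ``monomial $\times$ radial'' terms, and appeal to a general lemma on radial parts of such processes; the bookkeeping is essentially the same.) Everything else --- the Vandermonde expansion, the angular integration, and the radial substitution --- is routine.
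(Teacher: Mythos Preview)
Your argument is correct and is essentially the standard proof of Kostlan's theorem: expand the squared Vandermonde via the determinantal identity, integrate out the angles to collapse the double sum to a permanent, change variables $s_k=r_k^2$, and identify the resulting symmetric density on $(0,\infty)^n$ as the symmetrization of the product of $\mathrm{Gamma}(j,1)$ densities. The symmetrization step is handled correctly, including the normalization constant.

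Note, however, that the paper does not actually supply a proof of this lemma: it is quoted as a known result with citations to Kostlan and to the GAF book, and is used as a black box in the proof of Theorem~\ref{Gin}. So there is no ``paper's own proof'' to compare against; your write-up is precisely the argument one finds in those references.
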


\noindent Now for the proof of the claim. Since $|z| < 1$ and $t\in (0, \frac{1}{100}),$ $|z-X_k|\geq\frac{1}{t}$ implies for instance that $|X_k| > 99.$ Therefore, by elementary steps and applying Lemma \ref{lem:Lambda}, we obtain
\begin{eqnarray}
\PP(N_t\geq 1)&\leq \PP(\max_{k}|X_k|\geq 99)\\
&=\PP(\max_{k}|X_k|^2\geq 99^2)\\
&=\PP(\max_{k}|\lambda_k|^2 > 99^2n)\\
&= \PP(\max_{k}Y_k > 99^2n),
\end{eqnarray}
where we have used $X_j = \dfrac{\lambda_j}{\sqrt{n}}$ in going from the second to third line above.
Then a union bound and a Cramer-Chernoff estimate gives
\begin{eqnarray}
\PP(\max_{k}Y_k > 99^2n) 
&\leq n\PP(Y_n > 99^2n) \\
&\leq n\exp(-c_2 n), \label{eq:chernoffbound}
\end{eqnarray}
and combining this with the above estimate we obtain
\be\label{eq:Nexp}
\PP(N_t\geq 1) \leq n\exp(-c_2 n),
\ee
as desired.

\vspace{0.1in}

\noindent \textbf{Step $2$: Estimate on $\mathcal{L}_1$}

\vspace{0.05in}

The desired estimate 
is equivalent to 
\begin{equation}\label{tail}
\PP\big\{\mathcal{L}_1-\mathbb{E}(\mathcal{L}_1)\geq -\frac{t}{2}n -\mathbb{E}(\mathcal{L}_1)\big\}\leq 5\exp(-c_tn).
\end{equation}
As preparation towards this, observe that
$\dfrac{1}{n}\mathbb{E}(\mathcal{L}_1) = \mathbb{E}\left( \int F_td\mu_n\right),$
where $\mu_n$ is the empirical spectral measure defined in \eqref{esd}. By the circular law of random matrices \cite{taovu}, almost surely $\mu_n$ and its expectation both converge to the uniform measure on the unit disk. As a result, taking into account that $F_t$ is a bounded continuous function, we obtain
\begin{equation}\label{circ}
\lim_{n\to\infty}\dfrac{1}{n}\mathbb{E}(\mathcal{L}_1) = \dfrac{1}{\pi}\int_{\mathbb{D}}F_t dm = \dfrac{|z|^2- 1}{2} + \dfrac{t^2}{2},
\end{equation}
 where the second equality in \eqref{circ} follows from a computation similar to the one in Example \ref{example}. Using $|z| = 1- t,$ the quantity on the right  reduces to $-t+t^2$. Hence, for large $n$, we have $\mathbb{E}(\mathcal{L}_1)\le -\frac34 tn$ and hence, if the event in 
\eqref{tail} holds, then
\begin{equation}\label{circ1}
\mathcal{L}_1-\mathbb{E}(\mathcal{L}_1)
\ge \frac{t}{4}n.
\end{equation}
Thus, our immediate goal is reduced to showing that the probability of the above event is at most $5\exp(-c_tn)$ for an appropriate constant $c_t$.
We invoke the following result of Pemantle and Peres~\cite[Thm. 3.2]{PP}.
\begin{thm}\label{PemPer}
Given a determinantal point process with $n < \infty$ points and $f$ a  Lipschitz-$1$ function on finite counting measures, for any $a>0$ we have
$$\mathbb{P}\left(|f- \mathbb{E}(f)| \geq a \right)\leq 5 \exp\left(-\frac{a^2}{16(a + 2 n)} \right).$$
\end{thm}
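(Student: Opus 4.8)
The plan is to reprove this concentration bound the way Pemantle and Peres do: exploit that a determinantal point process is a \emph{strong Rayleigh} measure and run a Doob-martingale argument whose predictable quadratic variation is controlled by the number of points $n$ rather than by the size of the ground set. By a routine approximation (restrict to a bounded region carrying all the points w.o.p., replace it by a sufficiently fine finite partition, and use that $f$ is bounded and Lipschitz), one reduces to a determinantal measure $\mu$ on $\{0,1\}^{N}$, i.e.\ the law of an occupation vector $\eta=(\eta_1,\dots,\eta_N)$ with $\sum_i\eta_i=n$ almost surely. The only facts about $\mu$ I would invoke are: (i) conditioning on the values of any subset of the coordinates leaves $\mu$ strong Rayleigh; and (ii) the \emph{stochastic covering property} of strong Rayleigh measures — for each $i$, the conditional laws of the remaining coordinates given $\eta_i=1$ and given $\eta_i=0$ admit a monotone coupling $(\zeta^{1},\zeta^{0})$ with $\zeta^{1}\le\zeta^{0}$ coordinatewise and $\sum_j(\zeta^{0}_j-\zeta^{1}_j)\le1$. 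Thus the two full configurations $\zeta^{1}+e_i$ and $\zeta^{0}$ differ by relocating, or adding/removing, at most one point, whence for $1$-Lipschitz $f$,
\[
\bigl|\,\E[f\mid\eta_i=1]-\E[f\mid\eta_i=0]\,\bigr|\le 2 .
\]

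Next I would reveal the coordinates one at a time: set $\mathcal F_k=\sigma(\eta_1,\dots,\eta_k)$ and $M_k=\E[f\mid\mathcal F_k]$, so that $M_0=\E f$ and $M_N=f$. Writing $p_k=\PP(\eta_k=1\mid\mathcal F_{k-1})$ and $\Delta_k=\E[f\mid\mathcal F_{k-1},\eta_k=1]-\E[f\mid\mathcal F_{k-1},\eta_k=0]$, fact (i) lets me apply the displayed bound to $\mu(\,\cdot\mid\mathcal F_{k-1})$ and get $|\Delta_k|\le2$, while a one-line computation gives $M_k-M_{k-1}=(1-p_k)\Delta_k$ on $\{\eta_k=1\}$ and $-p_k\Delta_k$ on $\{\eta_k=0\}$. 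Hence $|M_k-M_{k-1}|\le2$ and $\E[(M_k-M_{k-1})^2\mid\mathcal F_{k-1}]=p_k(1-p_k)\Delta_k^2\le4p_k$, so the predictable quadratic variation of $M$ is at most $4V_N$ with $V_N=\sum_kp_k$.

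The remaining point is that $V_N$ is itself $O(n)$: since exactly $n$ points are present, $n-V_N=\sum_k(\eta_k-p_k)$ is a martingale with unit-bounded increments, and stopping it the first time $\sum p_k$ exceeds $2n$ and applying a Freedman-type martingale Bernstein inequality yields $\PP(V_N>2n)\le e^{-cn}$. On the complementary event, Freedman's inequality applied to $M$ (increments $\le2$, quadratic variation $\le8n$) gives $\PP(|f-\E f|\ge a)\le\exp(-a^{2}/(c_1a+c_2n))$; since a $1$-Lipschitz $f$ on configurations of $n$ points has oscillation at most $2n$, only $a\le2n$ matters, for which the discarded event $\{V_N>2n\}$ is absorbed, and gathering everything with deliberately loose constants produces exactly $5\exp(-a^{2}/(16(a+2n)))$.

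The hard part is entirely the stochastic covering property (ii): the assertion that conditioning a determinantal measure on the occupancy of a single site perturbs the joint law of all the remaining points only through a coupling that moves at most one point. This is the genuinely determinantal input — plain negative association is not enough — and it is proved through the stability/real-rootedness theory of strong Rayleigh measures; by comparison, the Doob martingale, the two applications of Freedman's inequality, and the continuum-to-finite reduction are routine. The only other thing needing care is bookkeeping: checking that discretizing the ground set does not blow up the Lipschitz constant of $f$, and that the ``move one point'' coupling is compatible with whatever transport metric the Lipschitz hypothesis refers to.
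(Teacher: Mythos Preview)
The paper does not prove this theorem at all: it is quoted verbatim from Pemantle and Peres \cite[Thm.~3.2]{PP} and used as a black box in the proof of Proposition~\ref{prop2}. So there is no ``paper's own proof'' to compare your proposal against.

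That said, your sketch is a faithful outline of the Pemantle--Peres argument itself. The reduction to a finite ground set, the Doob martingale $M_k=\E[f\mid\mathcal F_k]$ obtained by revealing sites one at a time, the control of the martingale differences via the stochastic covering property of strong Rayleigh measures, the bound on the predictable quadratic variation in terms of $\sum_k p_k$, and the final appeal to Freedman's inequality are exactly the ingredients of their proof. You are also right that the substantive input is the stochastic covering property (your point (ii)); the rest is bookkeeping. One small comment: the passage where you bound $\PP(V_N>2n)$ by a separate Freedman-type argument and then ``absorb'' this event into the constant $5$ is the place where the specific constants $5$, $16$, and $2n$ arise, and your sketch waves at this rather than tracking it. If you actually wanted the stated inequality with those exact constants you would need to follow that step more carefully, but as a structural outline your proposal is correct.
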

\noindent 
To say that $f$ is Lipschitz-$1$  on the space of finite counting measures means that
$$
\Big|f\left(\sum_{i=1}^{k+1}\delta_{x_i}\right)-f\left(\sum_{i=1}^k\delta_{x_i}\right)\Big|\le 1
$$
for any $k\ge 0$ and any points $x_1,\ldots ,x_k $.

In our case, as we have recalled, $\{X_1, X_2,...,X_n \}$ is a determinantal point process with exactly $n$ points. Moreover, $\mathcal{L}_1$ is  Lipschitz with Lipschitz constant $\|F_t\|_{\sup}=\log\frac{1}{t}$. Applying Theorem \ref{PemPer} to $\mathcal L_1/\log(1/t)$, we see that
\begin{align*}
\PP\big\{\mathcal{L}_1-\mathbb{E}(\mathcal{L}_1)\geq \frac{t}{4}n \big\}&= 5\exp\left\{-\frac{t^2n^2}{256(\log(1/t))^2(\frac{tn}{4\log(1/t)}+2n)} \right\}\\
&\le 5\exp\{-c_tn\}
\end{align*}
where we may take $c_t=c t^2/\log(1/t)^2$ for a large constant $c$. This completes the proof of the proposition.
\end{proof}
Now that we have proved the pointwise estimate, the net argument from Lemma \ref{Bernstein} can be used to show that the whole circle $|z| = 1-t$ lies in the lemniscate w.o.p. The maximum principle then shows that the corresponding disk lies in the lemniscate w.o.p. This concludes the proof that $\Lambda_n$ contains $\D_r$ w.o.p.

We next prove that $\Lambda_n\subseteq \D_s$ w.o.p. for $s>1$. Fix $1<s'<s$ and let $\delta=s-s'$ and $\epsilon=\frac12 \log s$. We present the proof in four steps.
\begin{enumerate}
\item  $\frac{|\lambda_j|}{\sqrt{n}}<s'$ for all $j$, w.o.p., i.e., with probability at least $1-e^{-cn}$. To see this, invoke Lemma~\ref{lem:Lambda} to see that the complementary event has probability less than $ne^{-c(s')n}$ by the same reasoning used in \eqref{eq:chernoffbound}, noting that $99^2$ may be replaced by any constant greater than $1$.
\item Fix $z$ with $|z|=s$ and let $f_{z,\delta}(w)=\log\left(\min\{\max\{|z-w|,\delta\},\frac{1}{\delta} \}\right)$, a bounded continuous function. Then by \cite{hiaipetz} (Theorem $9$),
\begin{align*}
\PP\left\{\big| \frac{1}{n}\sum_{j=1}^nf_{z,\delta}(\lambda_j/\sqrt{n})-\int_{\D}f_{z,\delta}(w)\frac{dm(w)}{\pi} \big|>\epsilon\right\}\le e^{-c_{\epsilon,\delta}n^2}.
\end{align*}
\item On the event in (1), $f_{z,\delta}(\lambda_j/\sqrt{n})=\log|z-\frac{|\lambda_j}{\sqrt{n}}|$ for all $j$ and all $|z|=s$. Also, $f_{z,\delta}(w)=\log|z-w|$ for all $w\in \D$. Hence, w.o.p.
\begin{align*}
\PP\left\{\big| \frac{1}{n}\log|p_n(z)|-\log s \big|>\epsilon\right\}\le e^{-c_{\epsilon,\delta}n^2}+e^{-cn}.
\end{align*}
Hence, $\frac{1}{n}\log|p_n(z)|>\frac12 \epsilon$ w.o.p. by the choice of $\epsilon=\frac12\log s$.
\item Let $m=\frac{100}{\epsilon \delta}$ and let $z_1,\ldots ,z_m$ be equispaced points on $\partial \D_s$. Then w.o.p. $\inf_{j\le m} \frac{1}{n}\log|p_n(z_j)|>\frac12 \epsilon$ by the previous step. On the event in (1), $\|\nabla \frac{1}{n}\log|p_n(z)|\|\le \frac{1}{\delta}$, hence  $$\inf_{|z|=s} \frac{1}{n}\log|p_n(z)|>0$$ 
w.o.p. On this event $\Lambda_n\subseteq \D_s$.
\end{enumerate}
This concludes the proof of Theorem \ref{Gin}. 

\bibliographystyle{abbrv}
\bibliography{ref}

\end{document}